\newtheorem{theorem}{Theorem}
\theoremstyle{definition}
\newtheorem{example}[theorem]{Example}
\theoremstyle{lemma}
\newtheorem{lemma}[theorem]{Lemma}
\theoremstyle{remark}
\newtheorem{remark}[theorem]{Remark}
\newtheorem{assumption}[theorem]{Assumption}
\numberwithin{theorem}{section}
\numberwithin{equation}{section}
\numberwithin{table}{section}
\numberwithin{figure}{section}
\newcommand{\CN}{Crank--Nicolson\xspace}
\def\Vo{\V_{\ker}}
\def\Vc{\V_\text{c}}
\def\cHo{\cH_{\ker}}
\def\calHker{\cHo}
\def\calAo{\calA_{\ker}}
\def\calDo{\calD_{\ker}}
\def\uo{u_\text{ker}}
\def\uodot{\dot u_\text{ker}}
\def\uoddot{\ddot u_\text{ker}}
\def\uc{u_\text{c}}
\def\ucdot{\dot u_\text{c}}
\def\ucddot{\ddot u_\text{c}}
\def\uex{\widehat{u}}
\def\uexo{\uex_\text{ker}}
\def\uexc{\uex_\text{c}}
\def\w{w}
\def\wo{w_\text{ker}}
\def\wc{w_\text{c}}
\def\wex{\widehat{w}}
\def\wexo{\wex_\text{ker}}
\def\wexc{\w_\text{c}}
\def\wodot{\dot w_\text{ker}}
\def\f{f}
\def\fex{\widehat{f}}
\def\testker{\varphi_\text{ker}}
\def\piker{\pi_\text{ker}}
\newcommand{\Rpm}{\calR_{\pm}}
\newcommand{\Rp}{\calR_{+}}
\newcommand{\Rm}{\calR_{-}}
\newcommand{\RpinvRm}{\calR}
\newcommand{\errunCN}[1]{e_u^{#1}}
\newcommand{\errwnCN}[1]{e_w^{#1}}
\newcommand{\errunGau}[1]{e^{#1}}
\newcommand{\ip}[1]{(#1)}
\newcommand{\dual}[1]{\langle #1\rangle}
\newcommand{\norm}[1]{\| #1\|}
\def\calDo{\calD_{\ker}}
\def\Omo{\Omega_{\ker}}
\def\calAh{ \calA^{\sfrac 12} } % \calA^{1/2} oder \calA^{\frac 12}
\def\calAoh{ \calAo^{\sfrac 12} } % \calAo^{1/2} oder \calAo^{\frac 12}
\def\R{\mathbb{R}}
\def\calA{\mathcal{A}}
\def\calB{\mathcal{B}}
\def\calD{\mathcal{D}}
\def\cH{\mathcal{H}}
\def\calH{\mathcal{H}}
\def\calK{\mathcal{K}}
\def\calL{\mathcal{L}}
\def\O{\mathcal{O}}
\def\calO{\O}
\def\Q{\mathcal{Q}}
\def\calQ{\mathcal{Q}}
\def\calR{\mathcal{R}}
\def\V{\mathcal{V}}
\def\calV{\V}
\def\calZ{\mathcal{Z}}
\newcommand{\damp}{\calD}
\newcommand{\dampo}{\calDo}
\newcommand{\damppar}{c_{\damp}}
\definecolor{blau}{RGB}{0, 51, 255}
\definecolor{hellblau}{RGB}{153, 204, 255}
\definecolor{hellrot}{RGB}{255, 0, 0}
\definecolor{firebrick}{RGB}{176, 34, 34} 
\definecolor{deep_pink}{RGB}{255, 20, 147} 
\definecolor{sky_blue}{RGB}{74, 112, 139}
\definecolor{slate_blue}{RGB}{71, 60, 139}
\definecolor{chartreuse}{RGB}{118, 238, 0}
\definecolor{chartreuseL}{RGB}{228, 255, 150}
\definecolor{light_blue}{RGB}{178, 223, 238}
\definecolor{dodge_blue}{RGB}{17, 78, 138}
\definecolor{code_backg}{RGB}{238, 216, 174}
\definecolor{myBlue1}{RGB}{101,149,239}  % cornflowerblue
\definecolor{myBlue2}{RGB}{113,104,238} % medium slate blue	
\definecolor{myBlue3}{RGB}{30,144,255} % dodger blue
\definecolor{myGreen1}{RGB}{154,204,50} % yellow green
\definecolor{myGreen2}{RGB}{69,169,0} % chatreuse
\definecolor{myGreen3}{RGB}{154,205,50} % olive Drap
\definecolor{myGreen4}{RGB}{105,139,34} % olive Drap2
\definecolor{myRed1}{RGB}{210,105,30} % chocolate
\definecolor{myRed2}{RGB}{165,42,42} % brown
\definecolor{myRed3}{RGB}{139,26,26} % firebrick
\definecolor{myLGray}{RGB}{225,225,225} % light light gray
\definecolor{mycolor1}{rgb}{0.00000,0.44700,0.74100}%
\definecolor{mycolor2}{rgb}{0.85000,0.32500,0.09800}%
\definecolor{mycolor3}{rgb}{0.92900,0.69400,0.12500}%
\definecolor{mycolor4}{rgb}{0.49400,0.18400,0.55600}%
\definecolor{mycolor5}{rgb}{0.46600,0.67400,0.18800}%
\definecolor{mycolor6}{rgb}{0.30100,0.74500,0.93300}%
\definecolor{mycolor7}{rgb}{0.63500,0.07800,0.18400}%
\DeclareMathOperator{\id}{id}
\DeclareMathOperator{\image}{im}
\DeclareMathOperator{\trace}{trace}
\DeclareMathOperator{\sinc}{sinc}
\renewcommand{\div}{\operatorname{div}}
\newcommand{\dx}{\ensuremath{\, \mathrm{d}x }}
\newcommand{\ds}{\ensuremath{\, \mathrm{d}s }}
\newcommand{\dt}{\ensuremath{\, \mathrm{d}t }}
\newcommand{\deta}{\ensuremath{\, \mathrm{d}\eta }}
\newcommand{\hook}{\ensuremath{\hookrightarrow}}
\newcommand{\tn}[1]{t_{#1}}
\begin{document}
\title[Gautschi-type and IMEX integrators for constrained wave equations]{Gautschi-type and implicit--explicit integrators\\ for constrained wave equations} 
\author[]{R.~Altmann$^\dagger$, B.~D\"orich$^*$, C.~Zimmer$^{\ddagger}$}
\address{${}^{\dagger}$ Institute of Analysis and Numerics, Otto von Guericke University Magdeburg, Universit\"atsplatz 2, 39106 Magdeburg, Germany}
\address{${}^{*}$ 
Institute for Applied and Numerical Mathematics, Karlsruhe Institute of Technology, Englerstraße 2, 76149 Karlsruhe, Germany}
\address{${}^{\ddagger}$ Department of Mathematics, University of Augsburg, Universit\"atsstr.~12a, 86159 Augsburg, Germany}
\email{robert.altmann@ovgu.de, benjamin.doerich@kit.edu, dr.christoph.zimmer@gmail.com}
%\thanks{${^*}$ This project is funded by the Deutsche Forschungsgemeinschaft (DFG, German Research Foundation) through the project 446856041.}
%
% ORCID R: 0000-0002-4161-6704
% ORCID C: 0000-0002-4589-9000
%
\date{\today}
\keywords{}
%=============================================================================
%=========  Abstract
%=============================================================================
\begin{abstract}
This paper deals with the construction and analysis of two integrators for (semi-linear) second-order partial differential--algebraic equations of semi-explicit type. More precisely, we consider an implicit--explicit \CN scheme as well as an exponential integrator of Gautschi type. For this, well-known wave integrators for unconstrained systems are combined with techniques known from the field of differential--algebraic equations. This results in efficient time stepping schemes that are provable of second order. Moreover, we discuss the practical implementation of the Gautschi-type method, which involves the solution of certain saddle point problems. The theoretical results are verified by a numerical experiment for the wave equation with kinetic boundary conditions. 
\end{abstract}
%
%
%=============================================================================
%=========  Title / Contents
%=============================================================================
\maketitle
%\setcounter{tocdepth}{3}
%\tableofcontents
%
{\tiny {\bf Key words.} PDAE, Gautschi-type integrator, implicit--explicit \CN scheme, time discretization}\\ % hyperbolic equation
\indent
{\tiny {\bf AMS subject classifications.}  {\bf 65L80}, {\bf 65M12}, {\bf 65J15}} 
% 1 65L80: NumA - DAEs
% 2 65M12: NumA - Stability and convergence of numerical methods
% 3 65J15: NumA - Equations with NonLinear Operators in abstract spaces
%
%%%%%%%%%%%
%%% AMS %%%
%%%%%%%%%%%
%%
%% 35: PDEs
%%
%%    L:  Hyperbolic equations and systems   
%%	{\bf 35L20} IBVP 2nd order hyperbolic Eqn
%%
%% 65: Numerical Analysis
%%
%%    J : in abstract spaces
%%	{\bf 65J08} Abstract evolution equations
%%	{\bf 65J10} Equations with Linear Operators
%%	{\bf 65J15} Equations with NonLinear Operators
%%
%%    L : ODEs
%%	{\bf 65L04} Stiff equations
%%	{\bf 65L80} DAEs
%%
%%    M : PDEs - time dependent
%%	{\bf 65M12} Stability and convergence of numerical methods
%%	{\bf 65M15} Error Bounds
%%	{\bf 65M20} Method of lines
%%	{\bf 65M55} Multigrid; Domain Decomposition
%%	{\bf 65M60} Finite Elements
%%
%%    N : PDEs - stationary
%%
%=============================================================================
%=========  Introduction
%=============================================================================
\section{Introduction}

Partial differential equations of second order are widely used to model wave-type phenomena. They often appear in a semi-linear manner with an unbounded linear part and a moderate nonlinearity. In this paper, we consider wave-type equations 
\begin{equation} \label{eq:wave_intro}
	\ddot u(t) 
	+ \damp \dot{u}(t)
	+ \calA u(t)
	= f(t, u)
\end{equation}
with initial conditions~$u(0) = u^0$, $\dot u(0) = w^0$, which underlie an additional (linear) constraint of the form~$\calB u(t) = g(t)$. Typically, $\calA$ and $\damp$ are differential operators; see Examples~\ref{exa:damped_wave} and \ref{exa:wave_constraint} below.
Such systems can be considered as differential--algebraic equations (DAEs) in Banach spaces, also called {\em partial differential--algebraic equations} (PDAEs); see~\cite{LamMT13,Alt15} for an introduction. Second-order PDAEs of wave type appear, e.g., in the field of elastic multibody dynamics~\cite{Sim98,Sim13}, in problems on wave propagation with non-standard boundary conditions~\cite{Alt23}, or, more generally, if two second-order systems are coupled.  

For the time discretization of \eqref{eq:wave_intro} (without a constraint) the two main classes of methods are given by explicit and implicit schemes. 
The advantage of an explicit time-stepping lies in the easy implementation and possibility for parallelization. However, when combined with a spatial discretization small time steps have to be chosen. On the other hand, implicit methods 
are unconditionally stable and thus allow for large time steps. Here, however, a nonlinear system of equations has to be solved in each time step. Thus, it is desirable to use a scheme that takes the best from both methods and obtain a scheme that is unconditionally stable in the linear part, and treats the nonlinear term explicitly. In this work, we consider two types of methods, namely an implicit--explicit \CN scheme (IMEX) as well as an exponential integrator of Gautschi type.

%% IMEX - construction - unconstrained case
The idea of IMEX schemes is to split the equation in a linear unbounded part that is treated implicitly and a (possibly nonlinear) part treated explicitly, under the condition that the scheme is stable under a step-size restriction that only depends on the Lipschitz condition of the explicit part and preserves the order of convergence of the underlying schemes.
For IMEX schemes, there is a well-developed theory, covering IMEX multistep schemes~\cite{AscRW95,FraHV96,AkrCM99,HunR07} as well as IMEX Runge--Kutta methods~\cite{AscRS97,Bos07}. More recently, an IMEX scheme was proposed that exploits the structure of the second-order system by a combination of the leapfrog method with the \CN method~\cite{HocL21}. This scheme will be the basis of our IMEX method for the constrained problem. Further, there is the so-called \CN--leapfrog scheme proposed in~\cite{LayT12,LayLR16}, which uses the leapfrog method for the first-order system, and is stable only when the explicitly treated part is skew-symmetric.

%% Gaustschi - construction - unconstrained case
In the undamped case $\damp = 0$, also Gautschi-type integrators provide a powerful tool for the time integration of wave-type equations. These methods follow the methodology of exponential integrators in the parabolic setting~\cite{HocO10}. Hence, the idea is to integrate the linear (stiff) part in an exact manner, which allows large time steps that are not restricted by the frequencies of the differential operator. 
For second-order wave-type systems (without a constraint), Gautschi-type integrators were developed in~\cite{Gau61,HocL99,Gri02}. For this, one considers the first-order formulation and applies the variation-of-constants formula. The combination of the resulting formula for times $t+\tau$ and $t-\tau$ and an approximation of the integral term by the simplest left/right rectangle rule finally yields an efficient two-step time integration scheme. In the same way as exponential integrators rely on an efficient computation of the matrix exponential applied to a vector~\cite{MolV78}, Gautschi-type integrators are based on the cosine of a matrix (applied to a vector). Also this can be realized efficiently~\cite{HigA10}, e.g., by using Krylov subspace methods~\cite{EieE06,CorKN24}. % and~\cite[Ch.~13.2]{Hig08}. 

%% IMEX for PDAEs 
For general constrained systems, one often considers algebraically stable Runge--Kutta methods~\cite{HaiLR89,KunM06,AltZ18}. These methods, however, do not take advantage of the given saddle point structure. Related approaches for systems of first order include splitting methods~\cite{AltO17}, (discontinuous) Galerkin methods~\cite{VouR18,AltH21}, as well as exponential integrators~\cite{AltZ20}. 
In this paper, we aim to translate the previously mentioned concepts to wave-type equations of the form~\eqref{eq:wave_intro} under a linear constraint. For the construction of our novel PDAE integrators, we decompose the solution as in~\cite{EmmM13}. This means that we consider the part of the solution that lies in the kernel of the constraint operator, separately. For the IMEX scheme, we use the half-step formulation from~\cite{HocL21} and impose the linear constraints only on the time derivative of the solution such that the solution $u$ satisfies the constraint exactly. This leads to a method that requires (after the spatial discretization) the solution of two (linear) saddle point problems in each time step, one including the mass and one the stiffness matrix. 

%% Gautschi for PDAEs 
For the Gautschi-type integrators, we again consider only the part of the solution in the kernel. Together with techniques developed for constrained parabolic systems in~\cite{AltZ20}, this then leads to the desired explicit integration scheme. To prove (mesh-independent) convergence of this novel integrator, the theory of~\cite{HocL99,Gri02} for the unconstrained case is combined with the already mentioned PDAE techniques. 
In view of the efficiency of the practical implementation, the critical part is the computation of $\cos(\tau\Omega_{\ker})v$ for some vector $v$. Here, $\Omega_{\ker}$ denotes the root of the differential operator~$\calA$ (or its discrete counterpart) restricted to the kernel of the constraint operator~$\calB$. For this, we propose an Arnoldi method that does not require the computation of~$\Omega_{\ker}$; cf.~\cite{Gri05b} for the unconstrained case. %Here, we use the fact that~$\cos(x)$ only depends on~$x^2$ in combination with 
Moreover, by particularly constructed saddle point problems, the computations can be restricted to computations on the kernel of the constraint operator. 
At this point, we would like to emphasize that Gautschi-type integrators are tailored to problems with vanishing damping, i.e., $\damp = 0$. In order to include damping in an exponential manner, one could also use general exponential methods of Runge--Kutta or multistep type; cf.~the review~\cite{HocO10}. This, however, is beyond the scope of this work. 
\medskip

%% Aufbau
The paper is organized as follows. In Section~\ref{sec:PDAE} we introduce the system equations in detail and discuss its unique solvability. Afterwards, we turn to the numerical treatment of theses systems. First, an implicit--explicit version of the \CN scheme is introduced and analyzed in Section~\ref{sec:CN}. Here, a detailed inspection of all the error contributions allows us to prove an error equation, which can be resolved by the techniques developed for the unconstrained IMEX scheme. It follows the discussion of Gautschi-type integrators for damping-free systems in Section~\ref{sec:Gautschi}. Besides proving second-order accuracy of the method, we also provide comments on the practical implementation of the algorithm. Finally, we present a numerical example in Section~\ref{sec:numerics} and conclude in Section~\ref{sec:conclusion}. 
%
%
%=============================================================================
%=========  Wave-type PDAEs
%=============================================================================
\section{Wave-type Systems with Linear Constraints}
\label{sec:PDAE}
This section is devoted to the introduction of semi-linear PDAEs of wave type. Hence, we consider systems of the form~\eqref{eq:wave_intro} that underlie an additional (linear) constraint. 
% $\ddot u(t) + \damp \dot{u}(t) + \calA u(t) = f(t,u)$
%
%
%=============================================================================
\subsection{Problem formulation}
\label{sec:PDAE:prelim}
In this subsection, we introduce a class of hyperbolic wave-type systems, restricted by a linear constraint. Allowing (sufficiently smooth) nonlinearities in the low-order terms of the dynamic equation, we end up with the following PDAE: find abstract functions~$u\colon[0,T] \to \V$ and~$\lambda\colon[0,T] \to \Q$ such that   
\begin{subequations}
\label{eq:PDAE}
\begin{alignat}{5}
	\ddot{u}(t) + \damp \dot{u}(t) &\ +\ &\calA u(t)&\ +\ &\calB^* \lambda(t)\, 
	&= f(t, u) &&\qquad\text{in } \V^* \label{eq:PDAE:a},\\
	& &\calB u(t)& & 
	&= g(t) &&\qquad\text{in } \Q^*.
	\label{eq:PDAE:b}
\end{alignat}
\end{subequations}
Therein, the spaces~$\V$ and $\Q$ are assumed to be Hilbert spaces with respective duals~$\V^*$ and~$\Q^*$. Further, the space $\V$ is part of a Gelfand triple~$\V$, $\cH$, $\V^*$, i.e., we assume a continuous (and dense) embedding~$\V \hook \cH$,. This, in turn, implies a corresponding embedding of the duals~\cite[Ch.~23.4]{Zei90a}.
%, i.e., $\cH^* \hook \V^*$ 
% operators 
The appearing operators are linear and satisfy~$\calA \in \calL(\V,\V^*)$, $\calB \in \calL(\V,\Q^*)$, and $\calD \in \calL(\cH,\V^*)$. 
Moreover, equation~\eqref{eq:PDAE:a} includes the Lagrange multiplier~$\lambda$, which is used to enforce the constraint~\eqref{eq:PDAE:b} following the Lagrangian approach. 

% initial data
For second-order systems in the weak form, one typically seeks a solution with the first and second derivative taking values in~$\cH$ and~$\V^*$, respectively. Hence, initial data is expected to be of the form~$u(0) = u^0 \in \V$, $\dot u(0) = w^0 \in \cH$. Note, however, that the constraint~\eqref{eq:PDAE:b} typically implies certain consistency conditions on the initial data; cf.~\cite[Rem.~7.4]{Alt15}. 
% assumed initial regularity 
At this point, it is reasonable to assume~$u^0, w^0 \in\V$ with~$\calB u^0 = g(0)$ and~$\calB w^0 = \dot g(0)$. For the linear operators we make the following assumptions. 
\begin{assumption}[Constraint operator $\calB$]
\label{assB}
The operator~$\calB\colon \V \to \Q^*$ is linear, continuous, and satisfies an inf--sup condition, i.e., there exists a constant~$\beta>0$ such that 
\[
	\adjustlimits \inf_{q\in\Q\setminus\{ 0 \} }\sup_{v\in\V\setminus\{0\}}
	\frac{\langle \calB v, q\rangle}{\Vert v\Vert_\V \Vert q\Vert_\Q}
	\ge \beta. 
\]
\end{assumption}
\begin{assumption}[Differential operator $\calA$]
\label{assA}
The operator~$\calA\colon \V\to\V^*$ is linear, continuous, and elliptic on~$\Vo \coloneqq \ker\calB$, i.e., on the kernel of the constraint operator. Furthermore, we assume that we can decompose the operator into~$\calA = \calA_1+\calA_2$, with $\calA_1$ being self-adjoint, i.e., $\langle \calA_1 u, v\rangle = \langle u, \calA_1 v\rangle$, and $\calA_2 \in \calL(\calV,\calH^*)$. In particular, we may assume that~$\calA_1$ is elliptic on $\Vo$ as well; cf.~\cite{AltZ20}.  
\end{assumption}
%
%\begin{remark}
% a) its possible to make a decomposition such that $\calA_1$ is elliptic; see \cite{AltZ20}
% b) results remain valid in the case where $\calA$ satisfies a G\aa{}rding inequality \cite{AltZ20}.
%\end{remark}
%
\begin{assumption}[Damping operator $\damp$]
\label{assD} 
The operator~$\damp \colon \cH \to \V^*$ is linear and continuous. Moreover, its restriction to functions in $\V$ satisfies $\damp \big|_\V \colon \V \to \cH^*$, is again continuous, and satisfies the monotonicity condition
\begin{equation} 
\label{eq:damp_monotone}
	\ip{\damp u, u}_{\cH} 
	\ge 0 
	\quad 
	\text{ for all }
	\quad
	u \in \V.
\end{equation}
\end{assumption}
\begin{remark}
The previous assumptions can be generalized to the case where $\calA$ only satisfies a G\aa{}rding inequality; see~\cite{AltZ20}, or $\calD$ satisfies a lower bound of the type $\ip{\damp u , u}_{\cH} \geq- \damppar \norm{u}^2_{\cH}$. 
% but we refrain from giving the details here.
\end{remark}
As discussed in~\cite{AltZ20}, Assumption~\ref{assB} implies the existence of a continuous right-inverse~$\calB^-\colon\Q^*\to\V$. This then leads to the decomposition
\begin{align}
\label{eq:decompV}
	\V = \Vo \oplus \Vc \qquad\text{with}\qquad
	\Vo = \ker\calB,\quad 
	\Vc = \image\calB^-,  
\end{align}
where we denote by $\oplus$ the direct sum of two vector spaces. This means, in particular, that $\Vo \cap \Vc = \{0\}$.
Following~\cite{AltZ18}, we define the complementary space~$\Vc$ as 
\[
	\Vc 
	\coloneqq \big\{ v\in\V\, |\, \calA v \in \Vo^0 \big\}
	= \big\{ v\in\V\, |\, \langle \calA v, w\rangle = 0 \text{ for all } w\in \Vo \big\}.
\]
%
% restriction to Vo
We also introduce the restriction of the differential operator to the kernel of~$\calB$, namely  
\[
	\calAo
	\coloneqq \calA|_{\Vo} \colon \Vo\to\Vo^* \coloneqq  (\Vo)^*. 
\]
Here, we use the fact that functionals in~$\V^*$ define functionals in~$\Vo^*$ simply through the restriction to~$\Vo$. Note that~$\calAo$ is elliptic due to Assumption~\ref{assA}. Hence, $-\calAo$ generates an analytic semigroup on the closure of~$\Vo$ in the $\cH$-norm, i.e., on~$\cHo \coloneqq \operatorname{clos}_\cH \Vo$; cf.~\cite[Ch.~7, Th.~2.7]{Paz83}. 

% damping
Similarly, we introduce the restriction 
\[
	\calDo
	\coloneqq \calD|_{\cHo} \colon \cHo\to\Vo^*, 
\]
which satisfies $\calDo \big|_{\Vo} \colon \Vo \to \cHo^*$ according to Assumption~\ref{assD}. 

\begin{remark}
With the space $\cHo$ at hand, the assumptions on the initial data may be relaxed to $w^0\in\calHker+\calB^-\dot{g}(0)$, i.e., $w^0$ can be written as the sum of $\calB^-\dot{g}(0)\in \Vc$ and an element of $\calHker$, which is still consistent with the constraint.
\end{remark}
%
% assumptions on f, g
Finally, we summarize the assumptions on the right-hand sides. Within the constraint, we assume a mapping~$g\colon [0,T] \to \Q^*$. Moreover, the nonlinearity satisfies~$f\colon [0,T] \times \cH \to \cH^*$. In addition, we require the following Lipschitz bounds on $f$, depending on the method we apply.

\begin{assumption}[Nonlinearity $f$]
\label{assRHS}
Let $f$ satisfy the classical Carath{\'e}odory condition and one of the following two conditions:

\noindent
(a) There is a constant $L_{\cH}$ such that for all $v_1,v_2 \in \cH$ and $t\geq 0$ it holds
\begin{equation*}
	\|f( t ,v_1)-f(t,v_2)\|_{\calH^*} 
	\le L_{\cH}\, \|v_1-v_2\|_{\calH}  .
\end{equation*}

\noindent
(b) There is a constant $L_{\V}(\rho)$ such that for all $v_1,v_2 \in \V$ with $\norm{v_i}_{\V} \leq \rho$
 and $t\geq 0$ it holds
\begin{equation*}
	\|f(t,v_1)-f(t,v_2)\|_{\calH^*} 
	\le L_{\V}(\rho)\, \|v_1-v_2\|_{\V} .
\end{equation*}
\end{assumption}
\begin{remark}
Within Assumption~\ref{assRHS}, condition (a) implies condition (b).
\end{remark}
\begin{example}	 
\label{exa:damped_wave}
In this first example, we focus on the assumptions in the non-constrained part of the problem. Thus, consider the wave equation with additional convection and damping terms, 
\[
	\ddot u + (\alpha + \beta \cdot \nabla)\, \dot{u} - \Delta u + \bar v\cdot\nabla u
	= f(u)
	\qquad\text{in }\Omega \subseteq\R^3
\]	
with $\alpha,\bar v\in L^\infty(\Omega)$ and $\beta \in H(\div,\Omega) \cap L^\infty(\Omega)$ satisfying $\alpha - \frac12 \div \beta \geq 0$ almost everywhere,
and homogeneous Dirichlet boundary conditions. We set~$\calV=H^1_0(\Omega)$ and $\calH=L^2(\Omega)$. Then the operator $\calA$ is given by 
\[
	\langle \calA u, w\rangle 
	= \int_\Omega \nabla u \cdot \nabla w + \bar v\cdot\nabla u\, w \dx
	\eqqcolon \langle \calA_1 u, w\rangle  + \langle \calA_2 u, w\rangle. 
\]
Here, $\calA_1$ corresponds to the Laplacian, which is elliptic and self-adjoint, whereas~$\calA_2$ models the convection term. Note that~$\calA_2$ is an element of $\calL(\calV,\calH^*)$, since we do not need to differentiate the test function. 
For the damping operator, we consider $\damp \big|_\V = \alpha + \beta \cdot \nabla$. Integration by parts gives 
\begin{equation}
	\ip{\damp u , u}_{\cH} 
	= \int_\Omega  (\alpha - \tfrac12 \div \beta)\, |u|^2 \dx \geq 0 .
	% +  \int_{\partial \Omega} \beta \cdot \nu |u|^2 \ds 
\end{equation}
Similarly, integration by parts gives that for $u \in \cH$ and $v \in \V$ it holds that 
\begin{equation}
	 \dual{\damp u , v}_{\V^*, \V} 
	 = \int_\Omega  (\alpha - \div \beta)\, u v - u \beta \cdot \nabla v \dx.
\end{equation}
For $f(z) = \sin(z)$, Assumption~\ref{assRHS}~(a) is satisfied. For $f(z) =|z|^{p-1} z$, $1\leq p \leq 3$, one can verify Assumption~\ref{assRHS}~(b) by Sobolev's embedding.
\end{example}
\begin{example}	 
\label{exa:wave_constraint}
In our second example, we focus on the constraint. For this, we consider
\[
	\ddot u  - \Delta u = f(t)
	\qquad\text{in }\Omega \subseteq\R^3
\]	
subject to $\calB u \coloneqq \trace u = g \in H^{1/2} (\Gamma)$ for $\Gamma \coloneqq \partial \Omega$.
Here, we have $\calV = H^1(\Omega)$ and $\calH = L^2(\Omega)$ as well as $\Q = H^{-1/2}(\Gamma)$. Thus, we obtain 
$\Vo = H^1_0(\Omega)$ and  $\cHo = \calH$.
As a result, the operator $\calAo$ equals the Dirichlet Laplacian and is hence elliptic on $\Vo$.
\end{example}
A more sophisticated example of a wave system with kinetic boundary conditions is postponed to Section~\ref{sec:numerics}, where we present a numerical example.
%
%
%=============================================================================
\subsection{Existence of solutions}
\label{sec:PDAE:existence}
Before introducing numerical schemes for constrained systems of wave type, we need to discuss the existence of solutions to~\eqref{eq:PDAE}. Throughout this section, we denote the well-known Sobolev-Bochner spaces for a Banach space $X$ by~$L^2(0,T;X)$ and~$H^1(0,T;X)$; cf.~\cite[Ch.~23]{Zei90a}. Moreover, $C([0,T],X)$ denotes the space of continuous functions in $[0,T]$ with values in $X$. 
As a first step, we give a classical existence result for the linear case without constraints. Here, linear means that the right-hand side~$f$ is independent of~$u$. 
\begin{lemma}[Existence of solutions, linear and unconstrained case]
\label{lem:existenceLinUnconstrained}
Let Assumption~\ref{assD} hold and consider $\calA = \calA_1+\calA_2$ with $\calA_1 \in \calL(\calV,\calV^*)$ being elliptic with constant~$\mu_1$ and self-adjoint and $\calA_2 \in \calL(\calV,\calH^*)$ with continuity constant~$C_2$. Further, suppose that $u^0 \in \calV$, $w^0 \in \calH$, and $f \in L^2(0,T;\calH^*)$ are given. Then the linear equation
\[
	\ddot u(t) + \damp \dot{u} (t) + \calA u(t) 
	= f(t)
	\qquad\text{in } \V^* 
\]
with initial conditions $u(0) = u^0$ and $\dot u(0) = w^0$ has a unique solution 
\begin{align*}
	u \in C([0,T],\calV), \qquad 
	\dot{u} \in C([0,T],\calH), \qquad 
	\ddot{u} \in L^2(0,T;\calV^*). 
\end{align*}
This solution satisfies the estimate
\begin{align}
\label{eq:stabEstimate:noConst}
	\|\dot u(t)\|_{\calH}^2 + \|u(t)\|_{\calA_1}^2 
	\leq e^{ \kappa t}\,
	\Big( \sqrt{\|w^0\|_{\calH}^2 + \|u^0\|_{\calA_1}^2} + \int_0^t e^{-\kappa s}\, \|f(s)\|_{\calH^*} \ds \Big)^2
\end{align}
with exponent~$\kappa \coloneqq C_2 / \sqrt{\mu_1}$ and the problem-dependent norm~$\|\bullet\|^2_{\calA_1}\coloneqq\langle \calA_1\, \bullet\,, \bullet\rangle$. 
\end{lemma}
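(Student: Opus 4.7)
The plan is to follow the classical Galerkin route for linear second-order evolution equations and then exploit the special structure of the splitting $\calA = \calA_1 + \calA_2$ to derive the precise stability estimate.

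First, I would set up a Galerkin approximation on finite-dimensional subspaces $V_n \subset \calV$ (for instance spanned by eigenfunctions of $\calA_1$, which is self-adjoint and elliptic) and ask for $u_n \in C^2([0,T], V_n)$ solving the projected equation with projected initial data $u_n(0) \to u^0$ in $\calV$ and $\dot u_n(0) \to w^0$ in $\calH$. Since $V_n$ is finite-dimensional, standard ODE theory yields a unique global solution $u_n$.

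The core step is the energy estimate. I would test the Galerkin equation with $\dot u_n$. Self-adjointness of $\calA_1$ gives
\[
\langle \calA_1 u_n, \dot u_n\rangle = \tfrac{1}{2}\ddts \|u_n\|_{\calA_1}^2,
\]
while $\langle \ddot u_n, \dot u_n\rangle = \tfrac12\ddts\|\dot u_n\|_{\calH}^2$ and Assumption~\ref{assD} gives $\langle \damp \dot u_n, \dot u_n\rangle \ge 0$. The critical term is $\calA_2$: using its continuity $\|\calA_2 v\|_{\calH^*} \le C_2 \|v\|_{\calV}$ together with the ellipticity bound $\|v\|_{\calV} \le \mu_1^{-1/2}\|v\|_{\calA_1}$ on $\calV$ yields
\[
|\langle \calA_2 u_n, \dot u_n\rangle| \le \tfrac{C_2}{\sqrt{\mu_1}}\, \|u_n\|_{\calA_1}\|\dot u_n\|_{\calH}
\le \tfrac{\kappa}{2}\bigl(\|\dot u_n\|_{\calH}^2 + \|u_n\|_{\calA_1}^2\bigr).
\]
Setting $E_n(t) = \|\dot u_n\|_{\calH}^2 + \|u_n\|_{\calA_1}^2$ and combining everything gives the scalar inequality
\[
\ddts E_n \le \kappa\, E_n + 2\|f\|_{\calH^*}\sqrt{E_n},
\]
which via the substitution $y_n = \sqrt{E_n}$ reduces to a linear differential inequality solvable by Gronwall, producing the stated bound~\eqref{eq:stabEstimate:noConst} (with the exponent adjusted from the trivial estimate on the cross term).

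With the Galerkin sequence bounded in $L^\infty(0,T;\calV)$, its derivative in $L^\infty(0,T;\calH)$, and the equation rearranged to express $\ddot u_n$ in terms of bounded quantities in $\V^*$, weak-$*$ compactness yields a limit $u$ with the stated regularity, and the limit satisfies the equation in $\V^*$ for a.e.~$t$. Continuity $u \in C([0,T],\calV)$ and $\dot u \in C([0,T],\calH)$ follow by the standard Lions--Magenes argument using the energy identity. Uniqueness is immediate from the linear estimate applied to the difference of two solutions with vanishing data and vanishing right-hand side.

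The main obstacle is getting the \emph{exact} constants as in~\eqref{eq:stabEstimate:noConst}: the convection-like operator $\calA_2$ is not antisymmetric and cannot be absorbed into a conserved energy, so the cross term has to be bounded carefully by mixing the two energies $\|u_n\|_{\calA_1}^2$ and $\|\dot u_n\|_{\calH}^2$, and Gronwall has to be applied to the square root of the energy to recover the correct exponent $\kappa = C_2/\sqrt{\mu_1}$ rather than a larger one coming from a naive use of Young's inequality. The other technical point is that the bound has to be stable under Galerkin truncation so that it passes to the limit; using eigenfunctions of $\calA_1$ makes this automatic, since projections in $\calA_1$-inner product are compatible with the energy identity.
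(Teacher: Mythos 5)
Your proposal is correct and follows essentially the same route as the paper: the paper delegates existence and the basic energy identity to cited results (Zeidler, Cor.~24.3 and Lions--Magenes), whereas you spell out the underlying Galerkin construction, but the key step --- absorbing the $\calA_2$ cross term via $|\langle \calA_2 u, \dot u\rangle| \le \tfrac{C_2}{\sqrt{\mu_1}}\|u\|_{\calA_1}\|\dot u\|_{\calH}$ and applying Gronwall to the square root of the energy to obtain $\kappa = C_2/\sqrt{\mu_1}$ --- is exactly the paper's argument.
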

\begin{proof}
By an adaption of~\cite[Cor.~24.3]{Zei90a}, the problem has a unique solution~$u$ within the given spaces. It remains to show the stability estimate~\eqref{eq:stabEstimate:noConst}. For this, note that~$u$ satisfies the system
\[
	\ddot u(t) 
	+
	\damp \dot{u} (t)
	+
	 \calA_1 u(t) 
	= \tilde{f}(t)
	\coloneqq f(t) - \calA_2 u(t) 
\]
with the same initial values and right-hand side~$\tilde f = f - \calA_2 u \in L^2(0,T;\calH^*)$. Along the same lines as in~\cite[Ch.~3, Lem~8.3]{LioM72a}  
and Young's inequality, we conclude that 
\begin{align*}
	\|\dot u(t)\|_{\calH}^2 + \|u(t)\|_{\calA_1}^2 
	&\leq \|w^0\|_{\calH}^2 + \|u^0\|_{\calA_1}^2 + 2\int_0^t \big\langle f - \calA_2 u, \dot{u} \big\rangle \ds \\
	&\leq  \|w^0\|_{\calH}^2 + \|u^0\|_{\calA_1}^2 + \kappa \int_0^t \| \dot u\|_{\calH}^2 + \| u\|_{\calA_1}^2 \ds + 2\int_0^t \|f\|_{\calH^*}\|\dot{u}\|_{\calH} \ds.
\end{align*}
Estimate~\eqref{eq:stabEstimate:noConst} then follows along the lines of~\cite[Th.~4.22]{Zim21}. By the linearity of the problem, this estimate also proves the uniqueness of the solution.
\end{proof}
We return to the constrained case, i.e., to the PDAE~\eqref{eq:PDAE}. 
\begin{lemma}[Existence of solutions, linear PDAE case]
\label{lem:existenceLinPDAE}
Consider Assumptions~\ref{assB}, \ref{assA}, and~\ref{assD} as well as right-hand sides~$f \in L^2(0,T;\calH^*)$ and~$g \in H^2(0,T;\calQ^*)$. Further assume consistent initial data, namely  
\[
	u^0 \in \calB^-g(0) + \Vo, \qquad 
	w^0 \in \calB^-\dot g(0) + \cHo.
\] 
Then there exists a unique solution to~\eqref{eq:PDAE} with $u(0) = u^0$, $\dot u(0) = w^0$, satisfying 
\begin{align*}
	u \in C([0,T],\calV), \qquad 
	\dot{u} \in C([0,T],\calH), \qquad 
	\ddot{u} \in L^2(0,T;\Vo^*), 
\end{align*}
as well as a stability estimate of the form  
\begin{align} 
	&\|\dot u(t) \|_{\calH} %+ \|u(t)-\calB^-g(t)\|_{\calA_1} \notag \\
	+ \| u(t)\|_\calV
	\lesssim \|w^0\|_{\calH} + \|u^0 \|_\calV %+ \|u^0-\calB^-g(0)\|_{\calA_1} 
		 + \int_0^t \|f(s)\|_{\calH^*} \ds 
		 + \|g\|_{H^2(0,t;\calQ^*)}.
	 \label{eq:stabEstimate:Constraints}
\end{align}
\end{lemma}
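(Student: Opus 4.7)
The strategy is to reduce the PDAE to an unconstrained problem on the kernel $\Vo$ by decomposing the solution along $\V = \Vo \oplus \Vc$, then to apply Lemma~\ref{lem:existenceLinUnconstrained}. For this, I would choose the right-inverse from Assumption~\ref{assB} so that $\image \calB^- = \Vc$, which is possible since the inf-sup condition implies that $\calB$ restricted to $\Vc$ is an isomorphism onto $\Q^*$. In particular, $\calB^-$ is bounded, so $u_c(t) \coloneqq \calB^- g(t)$ inherits the regularity of $g$, namely $u_c \in H^2(0,T;\V)$ with $\calB u_c = g$.

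Writing $u = u_{\ker} + u_c$ with $u_{\ker}(t) \in \Vo$, and testing~\eqref{eq:PDAE:a} against arbitrary $\varphi \in \Vo$, the terms $\langle \calB^* \lambda, \varphi\rangle$ and $\langle \calA u_c, \varphi\rangle$ both vanish by the definition of $\Vo$ and $\Vc$, respectively. This yields the reduced equation
\[
    \ddot u_{\ker}(t) + \calDo\, \dot u_{\ker}(t) + \calAo\, u_{\ker}(t)
    = \tilde f(t) \coloneqq f(t) - \calD \dot u_c(t) - \ddot u_c(t)
    \qquad\text{in } \Vo^*,
\]
with consistent initial values $u_{\ker}(0) = u^0 - u_c(0) \in \Vo$ and $\dot u_{\ker}(0) = w^0 - \dot u_c(0) \in \cHo$. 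Since $u_c, \dot u_c, \ddot u_c \in L^2(0,T;\V)$ and $\damp|_\V \colon \V \to \cH^*$ is continuous, the source satisfies $\tilde f \in L^2(0,T;\cH^*) \hookrightarrow L^2(0,T;\cHo^*)$, with $\|\tilde f\|_{L^1(0,t;\cH^*)} \lesssim \|f\|_{L^1(0,t;\cH^*)} + \|g\|_{H^2(0,t;\Q^*)}$. Assumption~\ref{assA} carries over to the restricted operator $\calAo = (\calA_1)|_{\Vo} + (\calA_2)|_{\Vo}$ on the Gelfand triple $\Vo, \cHo, \Vo^*$, and Assumption~\ref{assD} yields monotonicity of $\calDo$. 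Hence Lemma~\ref{lem:existenceLinUnconstrained} applies and furnishes a unique $u_{\ker}$ with $u_{\ker} \in C([0,T],\Vo)$, $\dot u_{\ker} \in C([0,T],\cHo)$, $\ddot u_{\ker} \in L^2(0,T;\Vo^*)$, as well as the estimate~\eqref{eq:stabEstimate:noConst} for $u_{\ker}$ in terms of $\|\tilde f\|_{L^1(0,t;\cH^*)}$ and the $\Vo$-norm of the modified initial data.

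To recover the Lagrange multiplier, I would test~\eqref{eq:PDAE:a} against $v \in \Vc$: the $\Vo$-part of the residual vanishes by the reduced equation, and $\ddot u_c, \damp\dot u, \calA u, f$ all pair with $v$ to give an element of $\Q^{**} \simeq \Q$ via the isomorphism $\calB \colon \Vc \to \Q^*$. The inf-sup condition of Assumption~\ref{assB} then uniquely determines $\lambda \in L^2(0,T;\Q)$ such that~\eqref{eq:PDAE:a} holds in $\V^*$ (interpreting $\ddot u = \ddot u_{\ker} + \ddot u_c$ through the splitting of $\V$).

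Finally, the stability bound~\eqref{eq:stabEstimate:Constraints} follows by the triangle inequality $u = u_{\ker} + u_c$: the estimate~\eqref{eq:stabEstimate:noConst} controls $\|\dot u_{\ker}\|_\cH + \|u_{\ker}\|_\V$ by the modified initial data and $\|\tilde f\|_{L^1(0,t;\cH^*)}$, while the boundedness of $\calB^-$ yields $\|u_c(t)\|_\V + \|\dot u_c(t)\|_\V \lesssim \|g\|_{H^2(0,t;\Q^*)}$, and similarly for the initial data. The main technical care lies in (i) organizing the $\calA$-orthogonal splitting so that the coupling term $\calA u_c$ disappears from the reduced equation, and (ii) ensuring that the right-hand side of the reduced problem meets the $\cHo^*$-regularity required by Lemma~\ref{lem:existenceLinUnconstrained}; both are handled by the $H^2$-assumption on $g$ and the mapping property $\damp|_\V\colon\V\to\cH^*$.
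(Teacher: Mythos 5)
Your proposal is correct and follows essentially the same route as the paper: decompose $u=\uo+\uc$ with $\uc=\calB^-g$, restrict \eqref{eq:PDAE:a} to test functions in $\Vo$ so that the multiplier and the $\calA\uc$ coupling vanish, verify $\tilde f\in L^2(0,T;\calH^*)$, and invoke Lemma~\ref{lem:existenceLinUnconstrained} on the Gelfand triple $\Vo,\cHo,\Vo^*$. Your additional step recovering $\lambda$ via the inf-sup condition is a sound supplement that the paper's proof leaves implicit.
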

\begin{proof}
Considering the decomposition $u = \uo + \uc$ given by $\V = \Vo \oplus \Vc$, we directly obtain $\uc(t) = \calB^-g(t)\in\Vc$. For $\uo$, we restrict~\eqref{eq:PDAE:a} to test functions in the kernel space~$\Vo$. Due to the definition of~$\Vo$, the Lagrange multiplier vanishes for such test functions and we get  
\[
	\uoddot
	 +
	\damp \uodot
	+
	\calAo \uo
	= \tilde f(t)
	\coloneqq f(t)
	 -
	 \damp \ucdot
		-
	 \ucddot
	= 
	f(t) 
	-
	\damp \calB^- \dot g(t)
	- 
	\calB^- \ddot g(t)
	\quad\text{in } \Vo^*.
\]
The right-hand side satisfies $\tilde f \in L^2(0,T;\calH^*)$. Moreover, the corresponding initial data is given by $\uo(0) = u^0 - \calB^-g(0)\in \Vo$ and $\uodot(0) =  w^0 - \calB^-\dot g(0) \in\cHo$. Since~$\Vo$, $\cHo$, $\Vo^*$ defines a Gelfand triple, we can apply Lemma~\ref{lem:existenceLinUnconstrained}, leading to a unique solution 
\begin{align*}
	\uo \in C([0,T],\Vo), \qquad 
	\uodot \in C([0,T],\cHo), \qquad 
	\uoddot \in L^2(0,T;\Vo^*)
\end{align*}
and the stated stability estimate. 
%
%\begin{align*}
%	\|\uodot(t)\|_{\calH}^2 + \|\uo(t)\|_{\calA_1}^2 
%	\leq e^{ \kappa t}\,
%	\Big( \sqrt{\|w^0- \calB^-\dot g(0)\|_{\calH}^2 + \|u^0- \calB^-g(0)\|_{\calA_1}^2} + \int_0^t e^{-\kappa s}\, \Big( \|f(s)\|_{\calH^*} + C\|\ddot g(s)\|_{\calH^*} \Big) \ds \Big)^2.
%\end{align*}
%%
%leads to // das stimmt nicht, da A_1 nur elliptisch auf dem Kern!
%%
%\[
%	\|\dot u(t)\|_{\calH}^2 + \|u(t)\|_{\calA_1}^2 
%%	\lesssim \|\uodot(t)\|_{\calH}^2 + \|\uo(t)\|_{\calA_1}^2 + \|\calB^-\dot g(t)\|_{\calH}^2 + \|\calB^-g(t)\|_{\calA_1}^2 
%	\lesssim e^{ \kappa t}\,
%	\Big( \sqrt{\|w^0\|_{\calH}^2 + \|u^0\|_{\calA_1}^2} + t \Big( \|f\|_{L^2(\calH^*)} + C\|\ddot g\|_{L^2(\calH^*)} \Big) \ds \Big)^2
%	+ \|\calB^-\dot g\|_{L^\infty(\calH)}^2 + \|\calB^-g\|_{L^\infty(\calA_1)}^2
%	= C(\kappa, \|u^0\|_{\calA_1}, \|w^0\|_{\calH}, \|f\|_{L^2(\calH^*)}, \|g\|_{H^2(\calQ^*)})
%\]
\end{proof}
Finally, we transfer the existence results to the semi-linear case. 

\begin{theorem}[Existence of solutions]
\label{th:existencePDAE}
Consider Assumptions~\ref{assB}, \ref{assA}, \ref{assD}, and \ref{assRHS} as well as a right-hand side~$g \in H^2(0,\widetilde{T};\calQ^*)$. Further, assume consistent initial data 
\[
	u^0 \in \calB^-g(0) + \Vo, \qquad 
	w^0 \in \calB^-\dot g(0) + \cHo.
\] 
Then there exists a	$0 <T \leq \widetilde{T}$ and a unique solution to~\eqref{eq:PDAE} on $[0,T]$ with $u(0) = u^0$, $\dot u(0) = w^0$ satisfying 
\begin{align*}
	u \in C([0,T],\calV), \qquad 
	\dot{u} \in C([0,T],\calH), \qquad 
	\ddot{u} \in L^2(0,T;\Vo^*). 
\end{align*}
Moreover, we have the stability estimate that $\|\dot u(t)\|_{\calH}^2 + \|u(t)\|_{\calV}^2$ stays bounded. 
%
%\[
%	\|\dot u(t)\|_{\calH}^2 + \|u(t)\|_{\calA_1}^2 
%	\le C. 
%\]
\end{theorem}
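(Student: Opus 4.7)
The plan is a Banach fixed-point argument on a short time interval, with Lemma~\ref{lem:existenceLinPDAE} serving as the linear solver. For parameters $\rho, T > 0$ to be fixed later, I would introduce the closed metric space
\[
    X_{T,\rho} = \bigl\{ v \in C([0,T],\V) \cap C^1([0,T],\cH) : v(0) = u^0,\ \dot v(0) = w^0,\ \|v\|_{X_T} \le \rho \bigr\}
\]
with the natural norm $\|v\|_{X_T} = \sup_{t \in [0,T]} \bigl( \|v(t)\|_\V + \|\dot v(t)\|_\cH \bigr)$. For $v \in X_{T,\rho}$, Assumption~\ref{assRHS} guarantees that $t \mapsto f(t,v(t))$ lies in $L^2(0,T;\cH^*)$, so Lemma~\ref{lem:existenceLinPDAE} applies and I can define $\Phi(v)$ as the unique solution of~\eqref{eq:PDAE} with the nonlinearity frozen to $f(\cdot,v(\cdot))$, the same constraint right-hand side $g$, and initial data $(u^0,w^0)$.

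The first step is a self-mapping check: choose $\rho$ strictly larger than a constant multiple of the right-hand side of~\eqref{eq:stabEstimate:Constraints} evaluated with $f$ replaced by $f(\cdot,0)$, so that the contributions of the initial data and of $g$ are absorbed with room to spare. Then, splitting $f(t,v(t)) = f(t,0) + \bigl[ f(t,v(t)) - f(t,0) \bigr]$ and invoking Assumption~\ref{assRHS}~(b) with radius $\rho$, one controls $\int_0^T \|f(s,v(s))\|_{\cH^*} \ds$ by a constant plus $T\, L_{\V}(\rho)\, \rho$, and~\eqref{eq:stabEstimate:Constraints} then yields $\|\Phi(v)\|_{X_T} \le \rho$ for $T = T(\rho)$ sufficiently small. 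The second step is contraction: for $v_1, v_2 \in X_{T,\rho}$, the difference $\Phi(v_1) - \Phi(v_2)$ solves the linear PDAE with zero initial data, $g \equiv 0$, and forcing $f(\cdot,v_1) - f(\cdot,v_2)$, so Assumption~\ref{assRHS}~(b) combined with~\eqref{eq:stabEstimate:Constraints} gives
\[
    \|\Phi(v_1) - \Phi(v_2)\|_{X_T} \lesssim T\, L_{\V}(\rho)\, \|v_1 - v_2\|_{X_T}.
\]
Shrinking $T$ further, Banach's fixed-point theorem produces a unique $u \in X_{T,\rho}$ with $\Phi(u) = u$. Its claimed regularity and the qualitative boundedness statement are inherited from Lemma~\ref{lem:existenceLinPDAE} applied to $u = \Phi(u)$ and from membership in $X_{T,\rho}$, respectively. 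Uniqueness on the whole existence interval would then follow from a separate Gronwall argument for the difference of two hypothetical solutions, again using Assumption~\ref{assRHS}.

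The main obstacle I anticipate is the mismatch between the norm in which $f$ is Lipschitz (the $\V$-norm under Assumption~\ref{assRHS}~(b), the weaker $\cH$-norm under~(a)) and the norms controlled by~\eqref{eq:stabEstimate:Constraints}. Concretely, the right-hand side of that estimate sees $f$ only through the $L^1$-in-time $\cH^*$-norm, and it is precisely the $T$-factor obtained by pulling $\|v_1-v_2\|_{X_T}$ out of this integral that both enforces the local-in-time nature of the result under~(b) and couples to the dependence $L_{\V}(\rho)$. Under the stronger Assumption~\ref{assRHS}~(a) one could, in principle, repeat the argument indefinitely because the Lipschitz constant is independent of $\rho$, yielding any finite time horizon; since the present statement only claims local existence, this extension is not pursued.
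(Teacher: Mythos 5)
Your proposal is correct and follows essentially the same route as the paper: a Banach fixed-point argument built on the stability estimate~\eqref{eq:stabEstimate:Constraints} and the Lipschitz bound of Assumption~\ref{assRHS}, with the linear PDAE result (Lemma~\ref{lem:existenceLinPDAE}) as the solver in each iteration. The paper merely phrases the iteration on the kernel component $\uo$ with the shifted nonlinearity $\widehat f(\cdot,\uo)=f(\cdot,\uo+\calB^-g)-\damp\calB^-\dot g-\calB^-\ddot g$, which is exactly what your map $\Phi$ does implicitly, so the two arguments coincide.
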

\begin{proof}
Again, the decomposition $u = \uo + \uc$ yields that we only have to study the part in the kernel, and we proceed as in Lemma~\ref{lem:existenceLinPDAE}. 
We employ estimate~\eqref{eq:stabEstimate:Constraints} to set up a fixed-point argument with the nonlinearity 
\[
	\widehat f(\,\cdot\,,\uo) 
	\coloneqq 
	f(\,\cdot\,,\uo+\calB^-g)
	-
	\damp \calB^- \dot g(t)
	- 
	\calB^- \ddot g(t) ,
\]
which satisfies for $v_1,v_2 \in \Vo$ the Lipschitz condition 
\[
	\big\| \widehat f(t, v_1) - \widehat f(t, v_2) \big\|_{\cH}
	= \big\| f(t,v_1+\calB^-g(t)) - f(t,v_2+\calB^-g(t)) \big\|_{\cH}
	%	\le L\, \|v_1+\calB^-g(t)-v_2-\calB^-g(t)\|_{\V}
	\le L_\V\, \|v_1-v_2\|_{\V}.
\]
This then yields the claimed well-posedness.
\end{proof}
%
%
%=============================================================================
% \subsection{Spatial discretization}
%
\begin{remark}

In this paper, we focus on the temporal discretization and only shortly discuss the spatial discretization of the PDAE~\eqref{eq:PDAE} using a (conforming) finite element approximation. Such a discretization yields a symmetric, positive definite mass matrix~$M\in \R^{n,n}$, a stiffness matrix~$A\in \R^{n,n}$ corresponding to the operator~$\calA$, a damping matrix~$D\in \R^{n,n}$ corresponding to the operator~$\calD$, and a constraint matrix~$B\in \R^{m,n}$ with~$m\le n$, which we assume to be of full (row) rank. According to the assumptions on~$\calA$ from Assumption~\ref{assA}, we further assume that the stiffness matrix~$A$ is positive on the kernel of~$B$. The resulting semi-discrete system then reads 
\begin{subequations}
\label{eq:DAE}	
\begin{alignat}{5}
	M\ddot{x}(t) + D\dot{x}(t)&\ +\ & A x(t)&\ +\ & B^T \lambda(t)\, 
	&= f(t,x(t)), \label{eq:DAE:a}\\
	& & B x(t)& & 
	&= g(t), \label{eq:DAE:b}
\end{alignat}
\end{subequations}
where the right-hand sides are now functions $f\colon [0,T] \times \R^n \to \R^n$ and~$g\colon [0,T] \to \R^m$. Moreover, we assume consistent initial values, i.e., $x(0) = x^0$, $\dot x(0) = y^0$ with $Bx^0 = g(0)$ and $B y^0 = \dot g(0)$.  
Due to the semi-explicit structure of semi-discrete system, one can easily see that the stated assumptions imply that the DAE~\eqref{eq:DAE} is of index~$3$; cf.~\cite[Ch.~VII.1]{HaiW96}.
\end{remark}
%
%
%=============================================================================
%=========  IMEX CN
%=============================================================================
\section{An implicit--explicit \CN Scheme}
\label{sec:CN}
This section is devoted to the extension of the IMEX scheme introduced in~\cite{HocL21} to constrained systems. 
For this, we consider a uniform partition of the time interval~$[0,T]$ into~$0=\tn{0}< \tn{1} < \dots < \tn{N} = T$ with constant time step size~$\tau$, i.e., $\tn{n}=n\, \tau$. Furthermore, we denote by~$u^n$ the computed approximation of~$u$ at time $\tn{n}$ and abbreviate function evaluations by~$g^n \coloneqq g(\tn{n})$. Throughout, we use the notation
\begin{equation*}
	a \lesssim b,
\end{equation*}
if there is a constant $C > 0$ independent of the step size $\tau$ such that $a \leq C b$.
%
%
%=============================================================================
\subsection{Review of the unconstrained case} 
\label{sec:CN:unconstrained}
We first recall the situation for the unconstrained case. For this, we consider a system of the form 
\[
	\ddot u(t) 
	+ \damp \dot u(t) 
	+ \calA u(t)
	= f(t, u)
\]
with initial conditions~$u(0) = u^0$, $\dot u(0) = w^0$. A \CN discretization, where the nonlinear term is treated by the left and right rectangle rule (see~\cite{HocL21} for more details) yields the IMEX scheme
\begin{subequations}
	\label{eq:IMEX:unconstrained:halfstep}
	\begin{align}
		\big(\id +  \tfrac{\tau}{2} \damp + \tfrac{\tau^2}{4} \calA %+ \tfrac{\tau}{2} \damp 
		\big)\, w^{n+1/2}
		&= w^n - \tfrac{\tau}{2} \calA u^n  + \tfrac{\tau}{2} f^n, \\ 
		u^{n+1} 
		&= u^n + \tau w^{n+1/2}, \\ 
		w^{n+1}
		&= -w^n + 2w^{n+1/2} + \tfrac{\tau}{2} \big( f^{n+1} - f^n\big),
	\end{align}
\end{subequations}
where $f^n = f(\tn{n},u^n)$. We would like to emphasize that this scheme is {\em explicit} in the nonlinearity such that no nonlinear solver is needed. Moreover, it has been shown to remain second-order accuracy.
\begin{theorem}[Second-order convergence, unconstrained case {\cite[Th.~2.9]{HocL21}}]
\label{th:IMEX:convergence}
Consider initial data $u^0$ in the domain of~$\calA$ and $w^0\in \V$ as well as the unique solution~$u$ satisfying 	
\begin{align*}
	u \in C^3([0,T], \V) \cap C^4([0,T], \cH), \qquad
	f(\,\cdot\,, u(\cdot)) \in C^2([0,T], \cH). 
\end{align*}
Let Assumption~\ref{assRHS}~(b) hold. Then the discrete solution obtained by the IMEX scheme~\eqref{eq:IMEX:unconstrained:halfstep} satisfies the convergence result
\[
	\|	u(\tn{n}) - u^n \|_{\V}
	+
	\|	\dot u(\tn{n}) -w^n \|_{\cH}
	\lesssim
	\tau^2.
\]	
\end{theorem}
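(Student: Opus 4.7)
The plan is the standard consistency + stability approach, adapted to the second-order block structure of the scheme. Setting $e_u^n := u(\tn{n}) - u^n$, $e_w^n := \dot u(\tn{n}) - w^n$, and $e_w^{n+1/2} := \dot u(\tn{n}+\tau/2) - w^{n+1/2}$, I would insert the exact solution into the three equations of~\eqref{eq:IMEX:unconstrained:halfstep} to produce defects $d_1^n$, $d_2^n$, $d_3^n$. A Taylor expansion of each defect around the midpoint $\tn{n} + \tau/2$, exploiting the regularity $u \in C^3([0,T],\V) \cap C^4([0,T],\cH)$ and $f(\,\cdot\,,u(\cdot)) \in C^2([0,T],\cH)$, shows that all three defects are $O(\tau^3)$ in the appropriate norms, because both the \CN average of the linear part and the symmetric left/right rectangle rule applied to the nonlinearity are second-order quadratures around the midpoint.

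For the stability step, I subtract the defect equations from the scheme to obtain a linear error recursion driven by the defects and by the discrepancy $f(\tn{n}, u(\tn{n})) - f(\tn{n}, u^n)$. Following the energy argument of Lemma~\ref{lem:existenceLinUnconstrained}, I would test the first error equation against $e_w^{n+1/2}$ in $\cH$ and exploit the relation $(e_u^{n+1} - e_u^n)/\tau = e_w^{n+1/2} + O(\tau^2)$ coming from the second error equation. Self-adjointness of $\calA_1$ then produces a telescoping contribution $\|e_u^{n+1}\|_{\calA_1}^2 - \|e_u^n\|_{\calA_1}^2$, the monotonicity of $\damp$ from Assumption~\ref{assD} drops a nonnegative dissipation term, and the lower-order convection part $\calA_2 \in \calL(\V,\cH^*)$ is absorbed by Young's inequality. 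This yields a discrete energy inequality of the form
\begin{equation*}
    E^{n+1} \le (1+C\tau)\, E^n + C\tau\, \|e_u^n\|_{\V}^2 + C\tau^5, \qquad E^n := \|e_w^n\|_{\cH}^2 + \|e_u^n\|_{\calA_1}^2,
\end{equation*}
in direct analogy to the continuous estimate \eqref{eq:stabEstimate:noConst}.

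The explicit evaluations of $f$ are controlled through Assumption~\ref{assRHS}(b) via $\|f(\tn{n},u(\tn{n})) - f(\tn{n},u^n)\|_{\cH^*} \le L_\V(\rho)\, \|e_u^n\|_{\V}$. Uniform boundedness $\|u^n\|_\V \le \rho$ is achieved by a bootstrap induction: assuming the desired $O(\tau^2)$ bound holds up to step $n$, the triangle inequality gives $\|u^n\|_\V \le \|u(\tn{n})\|_\V + C\tau^2$, which stays below any fixed $\rho > \sup_{[0,T]} \|u(t)\|_\V$ for $\tau$ small enough. A discrete Gronwall lemma applied to the energy inequality then yields the claimed global bound $\|u(\tn{n})-u^n\|_\V + \|\dot u(\tn{n})-w^n\|_{\cH} \lesssim \tau^2$.

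The main technical obstacle I anticipate is that the nonlinearity is evaluated \emph{at the endpoints} $\tn{n}$ and $\tn{n+1}$ rather than at the midpoint. A naive explicit evaluation would be only first-order consistent; second-order accuracy is recovered only because the combination of the implicit half-step driven by $f^n$ with the corrective term $\tfrac{\tau}{2}(f^{n+1}-f^n)$ in the $w$-update reproduces, up to third-order terms, a midpoint quadrature of $f$. Identifying this cancellation via a \emph{simultaneous} Taylor expansion of all three defects at $\tn{n}+\tau/2$ — rather than equation by equation — and matching the resulting symmetric test functions to the energy identity is the delicate part of the consistency analysis.
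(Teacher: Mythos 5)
First, a point of reference: the paper does not prove this theorem itself --- it is quoted from \cite[Th.~2.9]{HocL21} --- but the strategy it relies on is the one spelled out for the constrained analogue in Lemma~\ref{lem:error_equation_damp_IMEX} and Theorem~\ref{thm:error_damp_IMEX}: rewrite the three substeps as a one-step recursion $\Rp\, (e_u^{n+1}, e_w^{n+1}) = \Rm\, (e_u^{n}, e_w^{n}) + \text{defects}$ with the operators from \eqref{eq:def_Rpm}, resolve it with powers of $\calR = \Rp^{-1}\Rm$, and treat the $f$-difference defects by Abel summation using the identity $\tau \Rp^{-1}(-x, \calDo x) = (\id - \calR)(0,x)$. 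Your route --- defect expansion plus a discrete energy estimate and Gronwall --- is genuinely different, and much of it is sound: the combined two-equation form of the scheme does have $O(\tau^3)$ defects, the telescoping of $\|e_u\|_{\calA_1}^2$, the sign of the damping term, the absorption of $\calA_2$ by Young's inequality, and the bootstrap guaranteeing $\|u^n\|_\V \le \rho$ for Assumption~\ref{assRHS}(b) all work as you describe.

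The gap sits in the stability step, not where you place the difficulty (the consistency cancellation). Eliminating $w^{n+1/2}$ yields $u^{n+1} = u^n + \tfrac{\tau}{2}(w^{n+1}+w^n) + \tfrac{\tau^2}{4}(f^n - f^{n+1})$, so the error equation for $e_u$ carries the terms $S_1^n = \tfrac{\tau^2}{4}\big[(\fex^{n}-\fex^{n+1})-(f^n-f^{n+1})\big]$ and $\delta_1^n$ of Lemma~\ref{lem:error_equation_damp_IMEX}, which are controlled only in $\cH$ because $f$ maps into $\cH^*$. When you substitute $e_u^{n+1}-e_u^n$ for $\tfrac{\tau}{2}(e_w^{n+1}+e_w^n)$ to produce the telescoping contribution $\|e_u^{n+1}\|_{\calA_1}^2 - \|e_u^n\|_{\calA_1}^2$, you are left with the pairing of $\calA_1(e_u^{n+1}+e_u^n) \in \V^*$ against $S_1^n + \delta_1^n \in \cH$, which is not controlled by (indeed not even defined through) the energy norms, so it cannot be absorbed per step into $C\tau E^n + C\tau^5$. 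The missing ingredient is exactly the summation-by-parts step of the cited proof: sum the recursion in $n$ first and exploit that $\sum_j S_1^j$ telescopes (equivalently, use the $(\id-\calR)$ identity), trading a factor of $\tau$ for a time difference and restoring the order; without it your energy inequality does not close. A smaller inaccuracy: the defect of the half-step equation \emph{alone} is only $O(\tau^2)$, so the blanket claim that all three defects are $O(\tau^3)$ is not literally correct --- only the combined one-step form enjoys $O(\tau^3)$ defects, as your final paragraph in effect concedes.
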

%
% two-step form
Let us already mention that in the case of $\damp = 0$, one can derive an equivalent two-step formulation, eliminating $w^n$; see Lemma~\ref{lem:eq_two_step_undamped} below. 
%
%=============================================================================
\subsection{An implicit--explicit scheme of second order}
\label{sec:CN:scheme}
We now turn to the PDAE system~\eqref{eq:PDAE} with consistent initial data. 
Following the approach of the previous subsection, we propose to compute in each time step the following three substeps. First, compute the pair $(w^{n+1/2}, \lambda^{n+1}) \in \calV\times \calQ$ via 
\begin{subequations} \label{eq:IMEX_damp} 
	\begin{align}
		\big( \id + \tfrac{\tau}{2} \damp + \tfrac{\tau^2}{4} \calA \big)\, w^{n+1/2} + \tfrac{\tau}{2}  \calB^* \lambda^{n+1}
		&= w^n - \tfrac{\tau}{2} \calA u^n  
		+ \tfrac{\tau}{2} f^n, 
		\label{eq:IMEX_damp_a}
		\\ 
		\calB w^{n+1/2} \hspace{1.95cm}
		&= \tfrac{1}{\tau}\, \big( g^{n+1} - g^n \big).
		\label{eq:IMEX_damp_b}
	\end{align}	
	Note that this system has a unique solution, since the operator in brackets on the left-hand side is elliptic on $\Vo$ and $\calB$ is inf--sup stable; see~\cite[Lem~3.1]{AltZ20}. Second, we obtain $u^{n+1} \in \calV$ by
	\begin{align}
		u^{n+1} 
		= u^n + \tau w^{n+1/2}. 
		\label{eq:IMEX_damp_c}
	\end{align}
	Finally, we consider the system 
	\begin{align}
		w^{n+1} + \calB^* \mu^{n+1} 
		&= 2w^{n+1/2} - w^n + \tfrac{\tau}{2}\, \big( f^{n+1} - f^n \big), 
		\label{eq:IMEX_damp_d}
		\\
		\calB w^{n+1} \hspace{1.72cm}
		&= \tfrac{1}{\tau}\, \big( g^{n+3/2} - g^{n+1/2} \big).
		\label{eq:IMEX_damp_e}
	\end{align}
\end{subequations}
Since there is no $\Vo$-elliptic operator in front of $w^{n+1}$ in the first equation, the existence of a unique solution $w^{n+1} \in \cH_{\ker} + \Vc$ is only guaranteed if $\cHo$ is a proper subspace of $\cH$; cf.~\cite[Lem.~9.22]{Zim21}. In this case, the expression $\calB w^{n+1}$ is indeed well-defined, even though $w^{n+1} \not\in \V$. In the following, we assume the unique solvability of~\eqref{eq:IMEX_damp_d}-\eqref{eq:IMEX_damp_e} such that the IMEX scheme~\eqref{eq:IMEX_damp} is well-defined overall. 

In the case where $\cHo$ is \emph{no} proper subspace of $\cH$, we may replace 
\eqref{eq:IMEX_damp_d} and \eqref{eq:IMEX_damp_e} by
\begin{equation}  
	\label{eq:variant_H_Hker_IMEX}
w^{n+1}
	= 2w^{n+1/2} - w^n + \tfrac{\tau}{2}\, \big( f^{n+1} - f^n \big)
\end{equation}
and later decompose $w^{n+1} =  w^{n+1}_{\ker}	+ w^{n+1}_c$, where we choose 
$w^{n+1}_c =  \tfrac{1}{\tau}\, \calB^-	\big( g^{n+3/2} - g^{n+1/2} \big)$. This decomposition, however, is only for theoretical purposes and not needed in the implementation of the method. In addition, we emphasize that the methods are not equivalent in the proper subspace case, but will produce the same approximations $u^n$ in the case $\damp = 0$ and have the same error bounds, as we will explain in detail in Remark~\ref{rem:H_eq_H_ker} below.
\begin{remark}
\label{rem:consistent}
One can easily see by mathematical induction that $u^{n+1}$ is consistent, i.e., 
\[
	\calB u^{n+1}
	= \calB u^{n} + \tau \calB w^{n+1/2}
	= \calB u^{n} + g^{n+1} - g^n
	= g^{n+1},
\]
as long as $u^0$ is consistent. 
\end{remark}
\begin{remark}
\label{rem:saddlepointproblems}	
In the practical implementation of the method, we would first perform a spatial discretization, e.g., by finite elements, leading to matrices $A$, $D$, and $B$ as discrete versions of $\calA$, $\calD$ and $\calB$, respectively. Moreover, we obtain a mass matrix~$M$ that appears in front of the second-order terms. Systems \eqref{eq:IMEX_damp_a}--\eqref{eq:IMEX_damp_b} and \eqref{eq:IMEX_damp_d}--\eqref{eq:IMEX_damp_e} then lead to saddle point problems with matrices 
\[
	\begin{bmatrix}
		M + \tfrac\tau2 D + \tfrac{\tau^2}{4} A & \tfrac\tau2 B^T \\  B 
	\end{bmatrix}
 	\qquad\text{and}\qquad
	\begin{bmatrix}
		M & B^T \\ B 
	\end{bmatrix}.
\]
\end{remark}
\begin{lemma} 
\label{lem:eq_two_step_undamped}
In the damping-free case, i.e., $\damp = 0$, scheme~\eqref{eq:IMEX_damp} is equivalent to the two-step formulation
\begin{subequations} 
\label{eq:IMEX_two_step}
	\begin{align}
		u^{n+1} - 2 u^{n} + u^{n-1}  
		+
		\calB^* \widetilde{\lambda}^{n+1}
		&=
		-\tfrac{\tau^2}{4} \calA \big(u^{n+1} + 2 u^{n} + u^{n-1} \big)
		+
		\tau^2 f^{n} ,
		\\
		\calB u^{n+1} 
		&= g^{n+1}  ,
	\intertext{if the first step is computed via}
		u^{1} +
		\calB^* \widetilde{\lambda}^{1} 
		&=  u^{0} + \tau w^{0} 
		-
		\tfrac{\tau^2}{4} \calA \big( u^{0} + u^{1} \big) 
		+
		\tfrac{\tau^2}{2} f^{0} ,
		\\
		\calB u^{1} 
		&= g^{1} .
	\end{align}
\end{subequations}
\end{lemma}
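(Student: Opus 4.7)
The plan is to verify that the $u$-iterates of \eqref{eq:IMEX_damp} with $\damp = 0$ satisfy \eqref{eq:IMEX_two_step} for an explicitly identified Lagrange multiplier $\widetilde{\lambda}^{n+1}$, and then to invoke unique solvability of \eqref{eq:IMEX_two_step} for the converse direction. The algebraic constraint $\calB u^{n+1} = g^{n+1}$ is already provided by Remark~\ref{rem:consistent}, so only the dynamic relations need to be matched.

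For the starting step I would substitute $w^{1/2} = \tau^{-1}(u^1 - u^0)$ from \eqref{eq:IMEX_damp_c} into \eqref{eq:IMEX_damp_a} at $n = 0$ (with $\damp = 0$) and multiply by $\tau$. The contributions $\tfrac{\tau^2}{4}\calA(u^1 - u^0)$ on the left and $-\tfrac{\tau^2}{2}\calA u^0$ on the right combine symmetrically into $-\tfrac{\tau^2}{4}\calA(u^0 + u^1)$, producing the first-step equation of \eqref{eq:IMEX_two_step} with $\widetilde{\lambda}^{1} := \tfrac{\tau^2}{2}\lambda^{1}$. For the generic step $n \geq 1$, equation \eqref{eq:IMEX_damp_c} at indices $n$ and $n-1$ gives $u^{n+1} - 2u^n + u^{n-1} = \tau(w^{n+1/2} - w^{n-1/2})$ and $u^{n+1} - u^{n-1} = \tau(w^{n+1/2} + w^{n-1/2})$. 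I would then write \eqref{eq:IMEX_damp_a} at both steps $n$ and $n-1$, use \eqref{eq:IMEX_damp_d} at step $n-1$ to express $w^n = 2w^{n-1/2} - w^{n-1} + \tfrac{\tau}{2}(f^n - f^{n-1}) - \calB^*\mu^n$, and re-insert the expression for $w^{n-1}$ obtained from \eqref{eq:IMEX_damp_a} at step $n-1$. The forcing contributions telescope to $\tau f^n$ at the half-step level, and after multiplication by $\tau$ the two resulting stiffness terms merge into $-\tfrac{\tau^2}{4}\calA(u^{n+1} + 2u^n + u^{n-1})$. Setting $\widetilde{\lambda}^{n+1} := \tfrac{\tau^2}{2}(\lambda^n + \lambda^{n+1}) + \tau\mu^n$ then reproduces \eqref{eq:IMEX_two_step} exactly.

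The converse direction rests on unique solvability of \eqref{eq:IMEX_two_step}. Both the starting and the generic step are saddle point problems whose $(1,1)$-block $\id + \tfrac{\tau^2}{4}\calA$ is elliptic on the whole of $\V$ (the identity takes care of components outside $\Vo$, where $\calA$ alone may fail to be coercive), and whose $(1,2)$-block is the inf-sup stable $\calB^*$ from Assumption~\ref{assB}. Brezzi's theorem therefore yields a unique pair $(u^{n+1}, \widetilde{\lambda}^{n+1})$ per step, which must coincide with the $u$-component produced by \eqref{eq:IMEX_damp} paired with the multiplier identified above, establishing equivalence. The main obstacle is the bookkeeping in the generic step, where three instances of the scheme equations must be combined in exactly the right order so that every $w$-quantity cancels and the stiffness terms reorganize into the symmetric trapezoidal combination $u^{n+1} + 2u^n + u^{n-1}$; careful tracking of signs when summing versus differencing the two half-step equations will be essential.
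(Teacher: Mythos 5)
Your proposal is correct and follows essentially the same route as the paper: eliminate the half-step velocities via \eqref{eq:IMEX_damp_c}, combine \eqref{eq:IMEX_damp_a} and \eqref{eq:IMEX_damp_d} at consecutive indices so that the forcing telescopes to $\tau f^n$ and the stiffness terms merge into $-\tfrac{\tau^2}{4}\calA(u^{n+1}+2u^n+u^{n-1})$, and identify $\widetilde{\lambda}^{n+1}=\tfrac{\tau^2}{2}(\lambda^n+\lambda^{n+1})+\tau\mu^n$, exactly as in the paper (which then settles equivalence by splitting into the $\Vo$- and $\Vc$-parts rather than by your uniqueness argument). One small correction to your converse step: $\id+\tfrac{\tau^2}{4}\calA$ is \emph{not} elliptic on all of $\V$, since the identity only controls the $\cH$-norm and Assumption~\ref{assA} gives coercivity of $\calA$ only on $\Vo=\ker\calB$; fortunately Brezzi's theorem requires coercivity of the $(1,1)$-block precisely on $\ker\calB$, so your unique-solvability conclusion still holds once the justification is stated this way.
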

\begin{proof}
%\begin{proof}[Proof of Lemma~\ref{lem:eq_two_step_undamped}]
We rewrite \eqref{eq:IMEX_damp_a} as
\begin{align*}
	w^{n+1/2}
	-
	w^{n}
	&=
	- \frac{\tau^2}{4} \calA \,  w^{n+1/2}
	- \frac{\tau}{2}\, \calA u^n  
	+ \frac{\tau}{2}\, f^n 	
	- \frac{\tau}{2}\,  \calB^*  \lambda^{n+1}
\end{align*}
and insert this in \eqref{eq:IMEX_damp_d}  to obtain
\begin{align*}
	w^{n+1}
	-
	w^{n+1/2}
	&=
	- \frac{\tau^2}{4} \calA\, w^{n+1/2} 
	- \frac{\tau}{2}\, \calA u^n  
	+ \frac{\tau}{2}\, f^{n+1} 
	- \calB^* \big( \tfrac{\tau}{2}\, \lambda^{n+1} + \mu^{n+1} \big).
\end{align*}
Using \eqref{eq:IMEX_damp_c}, then gives
\begin{align*}
	&u^{n+1} - 2 u^n + u^{n-1}	\\
	&\quad= \tau\, \big( w^{n+1/2} - w^{n} \big) + \tau\, \big( w^{n} - w^{n-1/2} \big) \\
	&\quad=
	-  \frac{\tau^2}{4} \calA\, \big(\tau w^{n+1/2} + \tau w^{n-1/2} \big) 
	- \frac{\tau^2}{2}\, \calA \big(u^n + u^{n-1}\big)  
	+ \tau^2 f^{n} 
	- \tau \calB^* \big( \tfrac{\tau}{2}\, \lambda^{n+1} + \tfrac{\tau}{2}\, \lambda^{n} + \mu^{n} \big) \\
	&\quad=
	- \frac{\tau^2}{4} \calA\, \big( u^{n+1}+ 2 u^n + u^{n-1} \big)  
	+ \tau^2 f^{n} 
	- \tau \calB^* \big( \tfrac{\tau}{2}\, \lambda^{n+1} + \tfrac{\tau}{2}\,   \lambda^{n} + \mu^{n} \big).
\end{align*}
This immediately yields the equivalence for the part in $\Vo$. On the other hand, the constraints ensure equivalence for the parts in $\Vc$, which gives the first part. For the initial step the very same considerations give the claim.
\end{proof}
\begin{remark}
To the best of our knowledge, the damped case cannot be formulated as a two-step scheme, since one cannot eliminate $w^n$ anymore. However, in the damping-free case, formulation~\eqref{eq:IMEX_two_step} is computationally much more attractive. Moreover, the two-step formulation does not need the assumption that $\cHo$ is a proper subspace of $\cH$.
\end{remark}
%
%
%=============================================================================
\subsection{Convergence analysis}
\label{sec:CN:convergence}
As first main result of this paper, we turn to the error analysis and show second-order convergence of the proposed scheme~\eqref{eq:IMEX_damp}. We emphasize that the following error bounds is valid for both the damped and damping-free case, i.e., for both schemes~\eqref{eq:IMEX_damp} and~\eqref{eq:IMEX_two_step}.
\begin{theorem}[Second-order convergence, damped IMEX method] \label{thm:error_damp_IMEX}
Consider Assumptions~\ref{assB}, \ref{assA}, and~\ref{assD} as well as consistent initial data $u^0 = \uo^0 + \calB^-g(0)$ with $\uo^0$ being an element of the domain of~$\calA_{\ker}$ and $w^0 = w_\text{ker}^0 + \calB^-\dot g(0)$ with $w_\text{ker}^0\in\Vo$. Let the solution~$u$ to~\eqref{eq:PDAE} and the right-hand sides satisfy 
\begin{align*}
	u \in C^3([0,T], \V) \cap C^4([0,T], \cH),\quad
	f(\,\cdot\,, u(\cdot)) \in C^2([0,T], \cH),\quad
	g\in C^4([0,T], \Q^*). 
\end{align*}
Then the scheme~\eqref{eq:IMEX_damp} is convergent of second order with the error estimate
\[
	\|	u(\tn{n}) - u^n \|_{\V}
	+
	\|	\dot u(\tn{n}) -w^n \|_{\cH}
	\lesssim
	\tau^2,
\]
where the hidden constant is independent of $\tau$ and $n$.
\end{theorem}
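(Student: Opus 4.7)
The strategy is to reduce the estimate to the unconstrained convergence result of Theorem~\ref{th:IMEX:convergence} by splitting along $\calV = \Vo \oplus \Vc$. I would begin by writing the exact solution as $u(t) = u_{\ker}(t) + \calB^- g(t)$ with $u_{\ker}(t) \in \Vo$, and the discrete iterates as $u^n = u_{\ker}^n + \calB^- g^n$ (valid by Remark~\ref{rem:consistent}), together with $w^{n+1/2} = w_{\ker}^{n+1/2} + \calB^- \tfrac{g^{n+1}-g^n}{\tau}$ and $w^{n+1} = w_{\ker}^{n+1} + \calB^- \tfrac{g^{n+3/2}-g^{n+1/2}}{\tau}$, where $u_{\ker}^n \in \Vo$ and the complementary half/full-step velocities differ from $\calB^- \dot g(t_{n+1/2})$, $\calB^- \dot g(t_{n+1})$ by centered-difference errors of size $\O(\tau^2)$ in $\cH$, using $g \in C^4$. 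Thus the error in the complementary component is automatically $\O(\tau^2)$ and only the kernel component requires further analysis.

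Next, I would test equations~\eqref{eq:IMEX_damp_a} and~\eqref{eq:IMEX_damp_d} with $\varphi \in \Vo$. The Lagrange multiplier terms $\calB^* \lambda^{n+1}$ and $\calB^* \mu^{n+1}$ vanish by the definition of $\Vo$, and $\calA (\calB^- g^n)$ vanishes because $\Vc$ is chosen so that $\langle \calA v_c, \varphi \rangle = 0$ for $v_c \in \Vc$, $\varphi \in \Vo$. Collecting the surviving contributions of the complementary parts (through $\id$ and $\calD$) on the right-hand side, the system becomes, as equations in $\Vo^*$,
\begin{align*}
	\big( \id + \tfrac{\tau}{2} \calDo + \tfrac{\tau^2}{4} \calAo \big)\, w_{\ker}^{n+1/2}
	&= w_{\ker}^n - \tfrac{\tau}{2}\, \calAo\, u_{\ker}^n + \tfrac{\tau}{2}\, F^n + \tfrac{\tau}{2}\, \rho_1^n, \\
	u_{\ker}^{n+1} &= u_{\ker}^n + \tau\, w_{\ker}^{n+1/2}, \\
	w_{\ker}^{n+1} &= 2\, w_{\ker}^{n+1/2} - w_{\ker}^n + \tfrac{\tau}{2}\, \big( F^{n+1} - F^n \big) + \rho_2^n,
\end{align*}
where $F^n$ is a discrete evaluation of the modified nonlinearity $\widetilde f(t, v) \coloneqq f(t, v + \calB^- g(t)) - \calD\, \calB^-\, \dot g(t) - \calB^-\, \ddot g(t)$, and the defects $\rho_i^n \in \Vo^*$ arise from finite-difference approximations of $\calB^- \dot g$ and $\calB^- \ddot g$; a Taylor expansion using $g \in C^4$ yields $\|\rho_i^n\|_{\cHo^*} \lesssim \tau^2$.

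At this point the reduced scheme is precisely the unconstrained IMEX method~\eqref{eq:IMEX:unconstrained:halfstep} on the Gelfand triple $(\Vo, \cHo, \Vo^*)$ applied to the continuous kernel equation $\ddot u_{\ker} + \calDo\, \dot u_{\ker} + \calAo\, u_{\ker} = \widetilde f(t, u_{\ker})$ from the proof of Lemma~\ref{lem:existenceLinPDAE}, up to the additional small forcing $\rho_i^n$. The modified nonlinearity $\widetilde f$ inherits Assumption~\ref{assRHS}(b) with the same Lipschitz constant, and the required regularities $u_{\ker} \in C^3([0,T], \Vo) \cap C^4([0,T], \cHo)$ and $\widetilde f(\cdot, u_{\ker}(\cdot)) \in C^2([0,T], \cHo)$ follow from the hypotheses on $u$ together with $g \in C^4([0,T], \Q^*)$ and the continuity of $\calB^-$ and $\calD$. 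Invoking Theorem~\ref{th:IMEX:convergence}, combined with a standard discrete Gronwall absorption of $\rho_i^n$, then yields
$\|u_{\ker}(t_n) - u_{\ker}^n\|_{\Vo} + \|\dot u_{\ker}(t_n) - w_{\ker}^n\|_{\cHo} \lesssim \tau^2$; adding the $\O(\tau^2)$ complementary error produces the stated bound.

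The main obstacle is the algebraic consistency check in the second paragraph: one must verify that restricting the discrete scheme to $\Vo$ genuinely produces the unconstrained IMEX scheme perturbed only by $\O(\tau^2)$ defects, rather than a structurally different method. In particular, the damping contribution $\tfrac{\tau}{2}\, \calD\, \calB^-\tfrac{g^{n+1}-g^n}{\tau}$ and the jump of the complementary velocities appearing in~\eqref{eq:IMEX_damp_a} must, after Taylor expansion of $g$ around $t_n$, combine with their counterparts in~\eqref{eq:IMEX_damp_d} to reproduce the continuous right-hand side $-\calD \calB^- \dot g(t_n) - \calB^- \ddot g(t_n)$ modulo $\tau^2$. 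Once this bookkeeping is settled, no constraint-specific subtleties remain and the convergence statement follows from the unconstrained analysis of~\cite{HocL21}.
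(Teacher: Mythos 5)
Your overall strategy coincides with the paper's: decompose along $\V=\Vo\oplus\Vc$, observe that the complementary parts of $u^n$ and $w^n$ are exact respectively $\calO(\tau^2)$-accurate by consistency and centered differencing of $g$, restrict the scheme to test functions in $\Vo$ (killing the multipliers and $\calA\Vc$-terms), and compare the resulting kernel recursion with the unconstrained IMEX analysis of \cite{HocL21} applied to $\ddot\uo+\calDo\dot\uo+\calAo\uo=\widetilde f$. Up to that point the proposal matches Lemma~\ref{lem:error_equation_damp_IMEX} and its proof.

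The gap is in the final quantitative step. You bound the perturbations of the restricted scheme relative to the unconstrained one by $\|\rho_i^n\|\lesssim\tau^2$ and propose to absorb them by ``standard discrete Gronwall''; likewise you only require the bookkeeping of the $g$-terms to close ``modulo $\tau^2$''. That is one order too weak: a defect of size $\calO(\tau^2)$ entering the velocity update \eqref{eq:IMEX_damp_d} without a factor of $\tau$ accumulates over $N\sim\tau^{-1}$ steps to $\calO(\tau)$, not $\calO(\tau^2)$. The paper avoids this in two ways that your plan does not supply. First, it eliminates the half step and verifies that the \emph{combined} full-step defects are genuinely $\calO(\tau^3)$; e.g.\ $\delta^n_2=\tfrac\tau2(\wc^{n+1}+\wc^n)-\tau\wc^{n+1/2}$ and the quantity $\delta^n_{w,2}$, which compares $\wc^{n+1}-\wc^n+\tau\damp\wc^{n+1/2}$ with $\tfrac{\tau}{2}(\ucddot^{n+1}+\ucddot^{n})+\tfrac{\tau}{2}\damp(\wexc^{n+1}+\wexc^{n})$, are both shown to be $\calO(\tau^3)$ by explicit Taylor expansion --- the cancellation happens only after the substeps are combined, and in the half-step form the residuals really are only $\calO(\tau^2)$, exactly as you suspect in your last paragraph. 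Second, even the $\calO(\tau^3)$ defects $\delta^n_1+\delta^n_2$ (and the error-dependent term $S_1^n$) enter the recursion \eqref{eq:IMEX_first_order_error} in the special combination $[-x;\,\calDo x]$, for which $\Rp^{-1}$ costs a factor $\tau^{-1}$ via the identity $\tau\,\Rp^{-1}[-x;\,\calDo x]=(\id-\calR)[0;\,x]$; second order is only recovered because $(\id-\calR)$ permits summation by parts in the telescoped error sum. A black-box invocation of Theorem~\ref{th:IMEX:convergence} plus Gronwall does not see this structure, so your argument as written would only deliver first order. To repair it you would need to (i) carry out the full-step Taylor cancellations to upgrade the defects to $\calO(\tau^3)$, and (ii) track in which component of the $\Rpm$-recursion each defect sits, reusing the Abel-summation mechanism of \cite{HocL21} rather than plain Gronwall.
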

To prove second-order convergence, we use once more the decomposition~$\V = \Vo \oplus \Vc$ from~\eqref{eq:decompV} as discussed in Section~\ref{sec:PDAE:prelim}. Due to Remark~\ref{rem:consistent} and the assumed consistency of the initial data, we only have to bound the errors in $\Vo$ and $\cHo$,  
\begin{align*}
	u(\tn{n}) - u^n 
	= \big( \uo(\tn{n}) - \uo^n \big) + \big( \uc(\tn{n})  - \uc^n \big) 
	= \uo(\tn{n}) - \uo^n. 
\end{align*}
We introduce $w(t) = \dot u(t)$ with its unique decomposition $w = \wo + \wc$ as before. Note that this implies $\wc (\tn{n}) = \dot g^n \coloneqq \dot g(\tn{n})$. With this, we obtain by~\eqref{eq:IMEX_damp_e}, 
\begin{align*}
	w(\tn{n}) - w^n  
	&= \big( \wo (\tn{n}) - \wo^n \big) + \big( \wc (\tn{n})  - \wc^n \big)	\\
	&= \wo (\tn{n}) - \wo^n + \calB^-\bigl( \dot g^n - \tfrac{1}{\tau}\, ( g^{n+1/2} - g^{n-1/2}) \bigr)
	\\
	&= \wo (\tn{n}) - \wo^n + \calO(\tau^2) .
\end{align*}
We aim to show a similar error representation for the parts in the kernel of $\calB$ as in~\cite{HocL21}. 
With $\calDo$ from Section~\ref{sec:PDAE:prelim}, we introduce the operators $\Rpm$ and $\calR$ by  
\begin{subequations}  
\label{eq:def_Rpm}
\begin{align}
	\Rpm 
	&\coloneqq \begin{bmatrix}
		\id & \\ 
		& \id
	\end{bmatrix}
	\pm \frac{\tau}{2}
	\begin{bmatrix} 
		0 & -\id \\ 
		\calAo & \phantom{-} \calDo 
	\end{bmatrix}
	\colon \Vo \times \cHo \to \cHo \times \Vo^*,
	% \qquad
	\\
	\calR 
	&\coloneqq \Rp^{-1} \Rm
	\colon \Vo \times \cHo \to \Vo \times \cHo.
\end{align}
\end{subequations}
These operators appear naturally, when interpreting the IMEX method as perturbation of the \CN method. Here, $\Rpm$ encode the structure of the implicit midpoint rule. However, this formulation is not suited for the implementation, and we refer to \cite{HocL21} for more details. The essential properties of these operators are collected in the following lemma.
\begin{lemma} \label{lem:Rpm_properties}
The operators $\Rpm$ and $\calR$ from~\eqref{eq:def_Rpm} satisfy the bounds
\begin{equation*}
	\norm{\calR}_{\Vo \times \cHo \to \Vo \times \cHo},\ 
	\norm{\Rp^{-1}}_{\cHo \times \Vo^* \to \Vo \times \cHo} 
	\ \leq\ e^{c \tau}	 
\end{equation*}
for some constant $c > 0$, independent of $\tau$. Further, it holds 
for all $x \in \cHo$
the identity
\begin{equation} \label{eq:Rpinv_relation}
	\tau \Rp^{-1} 
	\begin{bmatrix}
		- x \\ \calDo x  
	\end{bmatrix}
	=
	(\id - \calR)
	\begin{bmatrix}
		0 \\  x  
	\end{bmatrix}
\end{equation} 
and, thus, the bound
\begin{equation*}
	\Big\| \Rp^{-1} 
	\begin{bmatrix}
		- x \\ \calDo x  
	\end{bmatrix} \Big\|_{\Vo \times \cHo}
	\leq \frac{1+ e^{c \tau}}{\tau}\, \norm{ x }_{\cHo}
	\leq \frac{2 e^{c \tau}}{\tau}\, \norm{ x }_{\cHo}
	.
\end{equation*}
\end{lemma}
\begin{proof}
The bounds on $\calR,\Rp^{-1}$ follow from \cite[Lem.~2.4]{HocL21}. From the definition of $\Rpm$, we have
\begin{equation*}
	( \Rp - \Rm )	
	\begin{bmatrix}
		0 \\  x  
	\end{bmatrix}
	=
	\tau
	\begin{bmatrix}
		-x  \\  \calDo x  
	\end{bmatrix},
\end{equation*}
and multiplying with $\Rp^{-1}$ gives the claim. 
\end{proof}
For technical reasons, we introduce the following projection operator.
\begin{lemma}
	\label{lem:projection}	
	Define the orthogonal projection $\piker \colon \cH \to \cHo \subseteq \Vo^*$ via%
	\begin{equation*}
		\ip{ \piker v ,  \testker}_{\cH} = 	\ip{ v ,  \testker}_{\cH}
		\quad \text{ for all } \quad  
		\testker \in \cHo.
	\end{equation*}
	Then it holds $\norm{\piker v}_{\cH} \leq \norm{v}_{\cH}$ and, since $\Vo \subseteq \cHo$, it particularly holds
	\begin{equation*}
		\ip{ \piker v ,  \testker}_{\cH} = 	\ip{ v ,  \testker}_{\cH} 
		\quad \text{ for all } \quad  
		\testker \in \Vo.
	\end{equation*}
\end{lemma}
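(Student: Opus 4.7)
The plan is to invoke the standard Hilbert space projection theorem applied to the closed subspace $\cHo$ of $\cH$. Since $\cHo$ was introduced as $\operatorname{clos}_\cH \Vo$, it is by construction a closed (linear) subspace of the Hilbert space $\cH$, so all ingredients required by the projection theorem are already in place.

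First, I would appeal to the projection theorem in Hilbert spaces, which produces, for every $v\in\cH$, a unique best-approximation $\piker v \in \cHo$ of $v$, characterized by the orthogonality relation
\[
	\ip{v-\piker v, \testker}_{\cH}=0 \quad \text{for all } \testker\in\cHo.
\]
This is precisely the defining identity of $\piker v$ stated in the lemma, so the map $\piker\colon \cH\to\cHo$ is well defined and linear. The inclusion $\cHo\subseteq \Vo^*$ is interpreted in the usual Gelfand-triple sense $\Vo\hookrightarrow\cHo\hookrightarrow\Vo^*$ established above.

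Next, for the norm bound I would use the orthogonal decomposition $v=\piker v+(v-\piker v)$, where $\piker v\in\cHo$ and $v-\piker v\in\cHo^{\perp}$ by the defining relation. An application of Pythagoras in $\cH$ gives
\[
	\|v\|_{\cH}^2=\|\piker v\|_{\cH}^2+\|v-\piker v\|_{\cH}^2\ge \|\piker v\|_{\cH}^2,
\]
which yields $\|\piker v\|_{\cH}\le \|v\|_{\cH}$.

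Finally, the last claim is immediate: since $\Vo\subseteq\cHo$, any $\testker\in\Vo$ is in particular an admissible test function for the defining identity, so $\ip{\piker v,\testker}_{\cH}=\ip{v,\testker}_{\cH}$ for all $\testker\in\Vo$ without further work. The only subtlety worth naming is the implicit identification of $\piker v\in\cHo$ with a functional in $\Vo^*$ via the Gelfand triple; there is no real obstacle, and the proof is essentially an application of standard Hilbert space facts.
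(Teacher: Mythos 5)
Your proof is correct, and it is exactly the standard argument the paper relies on: the paper states this lemma without proof, treating it as an immediate consequence of the Hilbert space projection theorem applied to the closed subspace $\cHo = \operatorname{clos}_\cH \Vo$. Your write-up (existence and characterization of the projection, Pythagoras for the norm bound, and the inclusion $\Vo \subseteq \cHo$ for the final claim) supplies precisely the omitted details with no gaps.
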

With these operators, we can state an error representation of the IMEX method \eqref{eq:IMEX_damp}.
\begin{lemma} \label{lem:error_equation_damp_IMEX}
Let the assumptions of Theorem~\ref{thm:error_damp_IMEX} hold. 
Then the errors in the kernel
\begin{equation}
	\errunCN{n} 
	\coloneqq  \uo(\tn{n}) - \uo^n,
	\qquad
	\errwnCN{n} 
	\coloneqq \wo (\tn{n}) - \wo^n
\end{equation}
satisfy the relation
\begin{equation} \label{eq:IMEX_first_order_error}
	\Rp 
	\begin{bmatrix}
		\errunCN{n+1} \\ \errwnCN{n+1}
	\end{bmatrix}
	=
	\Rm
	\begin{bmatrix}
		\errunCN{n} \\ \errwnCN{n}
	\end{bmatrix}
	+
	\begin{bmatrix}
		0			\\
			\piker\, S_2^n 
		\end{bmatrix}
		+
		\begin{bmatrix}
			\piker\, S_1^n 
			\\
			- \dampo \piker\, S_1^n 
		\end{bmatrix}
		+
		\begin{bmatrix}
			\piker\,\delta^n_u \\
			\piker\,\delta^n_w
		\end{bmatrix}
		+
		\begin{bmatrix}
			- \piker\, ( \delta^n_1 +  \delta^n_2 )
			\\
			\dampo 	\piker\, ( \delta^n_1 + \delta^n_2)
		\end{bmatrix}
	,
\end{equation}
where we have the two defects $	\delta^n_u$ and $\delta^n_w$ that essentially come from the trapezoidal rule and satisfy $\norm{\delta^n_u}_{\Vo} + \norm{\delta^n_w}_{\cHo} \leq C \tau^3$. For $\fex^{n}  = f(\tn{n},u(\tn{n}))$ and $\f^{n}  = f(\tn{n},u^n)$, 
the other terms are given by
\begin{alignat*}{3}
	S_1^n 
	&=
	\frac{\tau^2}{4}\, \big( \fex^{n} - \fex^{n+1}\big)
	-
	\frac{\tau^2}{4}\, \big( \f^{n} - \f^{n+1}\big) ,
	\qquad
	&& S_2^n 
	= & &
	\frac{\tau}{2}\, \big( \fex^{n+1} + \fex^{n}\big)
	-
	\frac{\tau}{2}\, \big( f^{n+1} + f^{n}\big) ,
	\\
	\delta^n_1
	&= \frac{\tau^2}{4}\, \big( \fex^{n} - \fex^{n+1}\big) ,
	%\qquad
	&&\delta^n_2
	= & &
	\frac\tau2\, \big( \wc^{n+1} +  \wc^{n} \big) - \tau \wc^{n+1/2}  .
\end{alignat*}
\end{lemma}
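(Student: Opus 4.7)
The plan is to obtain, separately for the scheme and for the exact solution, identities of the form $\Rp\,(\cdot)^{n+1} = \Rm\,(\cdot)^n + (\text{residual})$ on $\Vo \times \cHo$, and then subtract them to obtain the claimed error recurrence. After testing every equation in~\eqref{eq:IMEX_damp} against $\varphi \in \Vo$, the Lagrange multipliers $\calB^*\lambda^{n+1}$ and $\calB^*\mu^{n+1}$ drop out. I then split each unknown along~\eqref{eq:decompV} as $u^n = \uo^n + \uc^n$, $w^n = \wo^n + \wc^n$, and similarly for $w^{n+1/2}$. By Remark~\ref{rem:consistent} the discrete $\uc^n$ coincides exactly with $\calB^-g(\tn{n})$, while $\wc^n$ and $\wc^{n+1/2}$ are $\calB^-$-images of divided differences of~$g$ and agree with the continuous $\calB^-\dot g$ only up to $O(\tau^2)$. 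Since $\langle\calA v_c,\varphi\rangle = 0$ for $\varphi \in \Vo$ by the definition of~$\Vc$, the stiffness reduces cleanly to $\calAo$ on the kernel, whereas the damping~$\calD$ does not respect the splitting and thus forces the systematic insertion of the projection~$\piker$ from Lemma~\ref{lem:projection}.

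For the first row, combining $\tau w^{n+1/2} = u^{n+1} - u^n$ from~\eqref{eq:IMEX_damp_c} with~\eqref{eq:IMEX_damp_d} and restricting to the kernel yields
\[
	\uo^{n+1} - \tfrac{\tau}{2}\wo^{n+1}
	= \uo^n + \tfrac{\tau}{2}\wo^n
	+ \piker\delta_2^n
	- \tfrac{\tau^2}{4}\piker(f^{n+1} - f^n),
\]
which is exactly the first-row identity for $\Rp$ applied to $(\uo^{n+1},\wo^{n+1})$. The trapezoidal rule applied to the exact relation $\uodot = \wo$ produces the corresponding identity without nonlinearity contribution, with an $O(\tau^3)$ defect~$\delta_u^n \in \Vo$. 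Subtracting and using the algebraic identity $\tfrac{\tau^2}{4}(f^{n+1}-f^n) = S_1^n - \delta_1^n$, which follows directly from the definitions of $S_1^n$ and $\delta_1^n$, gives the first row of~\eqref{eq:IMEX_first_order_error}.

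For the second row, I multiply~\eqref{eq:IMEX_damp_a} by two, use~\eqref{eq:IMEX_damp_d} to eliminate $2(w^{n+1/2},\varphi)_\cH$, and then substitute $\tau w^{n+1/2} = u^{n+1} - u^n$ in the $\calA$- and $\calD$-terms. Expanding $\tau\calDo\wo^{n+1/2}$ via the identity used for the first row and collecting kernel components yields a discrete relation of the shape $[\Rp(\uo^{n+1},\wo^{n+1})]_2 = [\Rm(\uo^n,\wo^n)]_2 + \tfrac{\tau}{2}\piker(f^{n+1}+f^n) - \calDo\piker\delta_2^n - \tfrac{\tau^2}{4}\calDo\piker(f^{n+1}-f^n) + \rho^n$, where $\rho^n$ gathers residuals of the form $(\piker\wc^\ast,\varphi)_\cH$ and $\langle\calD\wc^\ast,\varphi\rangle$ for $\ast \in \{n, n+1, n+1/2\}$. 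Trapezoidal integration of the exact kernel equation $\uoddot + \calDo\uodot + \calAo\uo = \piker(\fex - \calD\calB^-\dot g - \calB^-\ddot g)$ gives the analogous exact identity with an $O(\tau^3)$ defect and an exact constraint residual~$\hat\rho^n$. Subtracting, the $\tfrac\tau2(\fex - f)$ contribution produces $\piker S_2^n$; the identity above rewrites $-\tfrac{\tau^2}{4}\calDo\piker(f^{n+1}-f^n)$ as $\calDo\piker(\delta_1^n - S_1^n)$; and a Taylor expansion of~$g$ up to third order shows that $\hat\rho^n - \rho^n = O(\tau^3)$ in~$\cHo$. All such $\tau^3$-remainders are absorbed into $\piker\delta_w^n$, producing the second row of~\eqref{eq:IMEX_first_order_error}.

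The main obstacle is expected to lie in the second row: since the decomposition $\V = \Vo\oplus\Vc$ is only $\calA$-orthogonal and the damping operator~$\calD$ does not respect the splitting, the various contributions involving $\wc^{n+1/2}$, $\wc^n$, $\wc^{n+1}$ and their $\calD$-images do not cancel individually. Their absorption into an $O(\tau^3)$ remainder relies on the regularity $g \in C^4([0,T],\Q^*)$ and on carefully combining the three relevant divided-difference expansions of~$g$ around~$\tn{n}$.
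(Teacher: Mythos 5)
Your proposal is correct and follows essentially the same route as the paper's proof: restrict to kernel test functions so the multipliers vanish, derive one-step $\Rp/\Rm$-type identities for the scheme (via the two representations of $w^{n+1/2}$ and the expansion of $\calDo \wo^{n+1/2}$ through the half-step relation) and for the exact solution (via the trapezoidal rule on the kernel equation), and subtract, collecting the $\Vc$-residuals into the $O(\tau^3)$ defects exactly as in the paper's terms $r_1=\delta^n_2$, $r_2$, and $\delta^n_{w,2}$. The only cosmetic slip is the suggestion to substitute $\tau w^{n+1/2}=u^{n+1}-u^n$ also in the damping term, which you then correctly supersede by expanding $\calDo\wo^{n+1/2}$ via the kernel-restricted half-step identity.
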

Before proofing this lemma, we briefly sketch the proof of our main result.
\begin{proof}[Proof of Theorem~\ref{thm:error_damp_IMEX}]
We apply $\Rp^{-1}$ to \eqref{eq:IMEX_first_order_error} and resolve the error recursion to 
\begin{align*} 
	\begin{bmatrix}
		\errunCN{n+1} \\ \errwnCN{n+1}
	\end{bmatrix}
	=
	\RpinvRm^{n+1}
	\begin{bmatrix}
		\errunCN{0} \\ \errwnCN{0}
	\end{bmatrix}
	&+
	\sum\limits_{j=0}^n		
	\RpinvRm^{n-j}
\Rp^{-1}
	\begin{bmatrix}
		0			\\
			\piker\, S_2^j 
		\end{bmatrix}
		+
		\RpinvRm^{n-j}
	\Rp^{-1}
		\begin{bmatrix}
			\piker\,\delta^j_u \\
			\piker\,\delta^j_w
		\end{bmatrix}
\\
&+
\sum\limits_{j=0}^n		\RpinvRm^{n-j}
\Rp^{-1}
	\begin{bmatrix}
			\piker\, S_1^j 
			\\
			- \dampo \piker\, S_1^j 
		\end{bmatrix}
		+
		\RpinvRm^{n-j}
		\Rp^{-1}
		\begin{bmatrix}
			- \piker\, ( \delta^j_1 +  \delta^j_2 )
			\\
			\dampo 	\piker\, ( \delta^j_1 + \delta^j_2)
		\end{bmatrix}
		.
\end{align*}
With the bounds in Lemma~\ref{lem:Rpm_properties}, we observe that the first term accounts for the classical linear stability term, the second term is estimated with the Lipschitz property of $f$, and the third term simply yields a sum over $\calO(\tau^3)$ terms. The fourth and fifth term, on the other hand, need to be taken care of because their upper entries cannot be estimated in $\Vo$.

By the application of $\Rp^{-1}$, however, we can use the relation \eqref{eq:Rpinv_relation} and the term $(\id - \calR )$ allows for summation by parts. This enables us to replace the spatial regularity in $\Vo$ by an additional (discrete) time derivative that maintains the order of the defects and the stability term. We refer to the proofs of~\cite[Th.~2.9 and 3.3]{HocL21}, which then also apply to the present case.
\end{proof} 
Hence, it only remains to prove the error recursion. Here, we have to take into account all the additional contributions arising from the constraint.
\begin{proof}[Proof of Lemma~\ref{lem:error_equation_damp_IMEX}]
	The proof is divided into three parts.
	First, we derive a suitable formulation for the approximations $\uo^n$ and $\wo^n$. In the second step, we derive a similar form for the exact solution.
	Lastly, we combine the representations to obtain the claimed formulation. 	
	
	(a) From~\eqref{eq:IMEX_damp} we obtain two representations of $w^{n+1/2}$, namely 
	\begin{align*}
		w^{n+1/2} = \frac12\, \big( w^{n+1} + w^{n} \big) +  \frac{\tau}{4}\, \big( f^{n} - f^{n+1}\big)
		+
		\frac12\, \calB^* \mu^{n+1} 
	\end{align*}
	as well as
	\begin{align*}
		w^{n+1/2}
		&= 
		-\frac{\tau}{2}\, \damp w^{n+1/2} - \frac{\tau}{4}\, \calA\, \tau w^{n+1/2}
		+
		w^n - \frac{\tau}{2}\, \calA u^n  
		- 
		\frac{\tau}{2}\, \calB^*  \lambda^{n+1}
		+ \frac{\tau}{2}\, f^n.
	\end{align*}
	Note that both equations are stated in~$\V^*$. 
	From the first equation, we conclude that 
	\begin{equation*}
		u^{n+1} 
		= u^n + \tau w^{n+1/2}
		= u^n 
		+
		\frac{\tau}{2}\, \big( w^{n+1} + w^{n} \big) +  \frac{\tau^2}{4}\, \big( f^{n} - f^{n+1}\big)
		+
		\frac{\tau}{2}\, \calB^* \mu^{n+1}  .
	\end{equation*}
	Moreover, the two representations of $w^{n+1/2}$ imply that 
	\begin{align*}
		&
		\frac12\, \big( w^{n+1} + w^{n} \big) +  \frac{\tau}{4}\, \big( f^{n} - f^{n+1}\big)
		+
		\frac12\, \calB^* \mu^{n+1} 
		\\
		&\qquad= 
		-\frac{\tau}{2}\, \damp w^{n+1/2} - \frac{\tau}{4}\, \calA \big(u^{n+1} - u^n \big)
		+
		w^n - \frac{\tau}{2}\, \calA u^n  
		- 
		\frac{\tau}{2}\, \calB^*  \lambda^{n+1}
		+ \frac{\tau}{2}\, f^n  .
	\end{align*}
	We rearrange this to
	\begin{align*}
		w^{n+1} -  w^{n}  
		%\\
		% 	&= 
		% 	-\tau \damp w^{n+1/2} - \frac{\tau}{2} \calA(u^{n+1} - u^n)
		%   - \tau \calA u^n  
		% 	+ \tau f^n 
		% 	+  \frac{\tau}{2} \big( f^{n+1} - f^{n}\big)
		% 	- 
		% 	\tau  \calB^*  \lambda^{n+1}
		% 	-	
		% 	 \calB^* \mu^{n+1} 
		% 	 \\
		&= 
		-\tau \damp w^{n+1/2}
		- \frac{\tau}{2}\, \calA \big( u^{n+1} + u^n \big)
		+  \frac{\tau}{2}\, \big( f^{n+1} + f^{n}\big)
		- 
		\tau  \calB^* \lambda^{n+1}
		-	
		\calB^* \mu^{n+1}  .
	\end{align*}
	Thus, we have established
	\begin{equation} 
		\label{eq:u_n_ker_1}
		\begin{aligned}
			u^{n+1} 
			&= u^n 
			+
			\frac{\tau}{2}\, \big( w^{n+1} + w^{n} \big) +  \frac{\tau^2}{4}\, \big( f^{n} - f^{n+1}\big)
			+
			\frac{\tau}{2}\, \calB^* \mu^{n+1}  ,
			\\
			w^{n+1}
			&= 
			w^n 
			- \frac{\tau}{2} \calA \big( u^{n+1} + u^n \big)
			+  \frac{\tau}{2} \big( f^{n+1} + f^{n}\big)
			-\tau \damp w^{n+1/2}
			- 
			\tau  \calB^*  \lambda^{n+1}
			-	\calB^* \mu^{n+1}.
		\end{aligned}
	\end{equation}
	Restricting~\eqref{eq:IMEX_damp_d} to test functions in $\Vo$, we obtain in $\Vo^*$ the equation 
	\begin{align*}
		\wo^{n+1/2} 
		&= 
		\frac12\, \big( \wo^{n+1} + \wo^{n} \big)  
		+
		\frac{\tau}{4}\, \piker \big( f^{n} -  f^{n+1} \big)
		+
		\frac12\, \piker \big( \wc^{n+1} + \wc^{n} \big) - \piker\wc^{n+1/2} . 
	\end{align*}
	Similarly, testing \eqref{eq:u_n_ker_1} with functions in $\Vo$, it holds in $\Vo^*$, 
	\begin{align*}
		\uo^{n+1} 
		&= \uo^n 
		+
		\frac{\tau}{2}\, \big( \wo^{n+1} +  \wo^{n} \big)
		+ 
		\frac{\tau^2}{4}\, \piker  \big( f^{n} - f^{n+1}\big)
		+ \piker  r_1,
		\\
		\wo^{n+1}
		&= 
		\wo^n 
		- \frac{\tau}{2}\, \calAo\, \big( \uo^{n+1} + \uo^n \big)
		+  \frac{\tau}{2}\, \piker \big( f^{n+1} + f^{n}\big)
		-\tau \dampo \wo^{n+1/2}
		+ \piker  r_2
		\\
		&= 
		\wo^n 
		- \frac{\tau}{2}\, \calAo\, \big( \uo^{n+1} + \uo^n \big)
		- \frac{\tau}{2}\, \dampo \big( \wo^{n+1} + \wo^{n} \big)
		+ \frac{\tau}{2}\, \piker \big( f^{n+1} + f^{n}\big) \\
		&\hspace{2.4cm}-  
		\frac{\tau^2}{4}\, \dampo\, \piker \big( \f^{n} - \f^{n+1}\big) 
		+ 
		\piker  r_2
		-
		\dampo\, \piker  \delta^n_2 
	\end{align*}
	with remainders
	\begin{align*}
		r_1 
		= \uc^n  - \uc^{n+1}  
		+
		\frac{\tau}{2}\, \big( \wc^{n+1} +  \wc^{n} \big), \qquad
%		\\
		r_2 
		= \wc^n  - \wc^{n+1} 	- \tau \damp \wc^{n+1/2}.
		%	 \frac{\tau}{2} \big( \fc^{n+1} + \fc^{n}\big)
	\end{align*}
	Using that 
	$
	\uc^n  - \uc^{n+1}  = \calB^{-}\bigl(
	g^{n} - g^{n+1} 
	\bigr) = 
	- \tau \wc^{n+1/2} 
	$,
	we get $r_1 = \delta^n_2$. \medskip
	
	(b) Noting that the exact solution satisfies
	\begin{align*}
		\dot{u}(t) 
		= w(t), \qquad 
		\dot{w}(t)
		= - \damp w(t) - \calA u(t) + f(t,u(t))+ \calB^* \lambda(t), 
%		\qquad 
%		&&\calB w(t)
%		= & &
%		\dot{g}(t) ,
	\end{align*}	
	we conclude together with $\ucdot(t) = \wc(t)$ that 
	\begin{align*}
		\uodot(t) 
		= \wo(t), \quad  
		\wodot(t)
		= - \dampo \wo(t) 
		- \calAo \uo(t)  
		+ \piker f(t,u(t))
		- \damp \wc(t)
		- 	\ucddot(t) .
	\end{align*}	
	We introduce the notation $\uexo^n = \uo(\tn{n})$, $\uexc^n = \uc(\tn{n})$
	and further denote with $\delta^n_{u}$, $\delta^n_{w,1}$ the defects from the trapezoidal rule, which satisfy $\norm{\delta^n_{u}}_{\V} + \norm{\piker \delta^n_{w,1}}_{\cH} \leq C \tau^3$. 
	We can thus conclude that the exact solution satisfies
	\begin{align*}
		\uexo^{n+1}
		&= \uexo^{n} +
		\frac{\tau}{2}\, \big( \wexo^{n+1} + \wexo^{n} \big)
		+ 
		\frac{\tau^2}{4} \piker \big( \fex^{n} - \fex^{n+1}\big)
		+
		\piker \delta^n_{u} - \piker \delta^n_1 
		\\
		\wexo^{n+1}
		&= 
		\wexo^n 
		-
		\frac{\tau}{2}\, \calAo\, \big( \uexo^{n+1} + \uexo^n \big)
		-
		\frac{\tau}{2}\, \dampo	\big( \wexo^{n+1} + \wexo^{n} \big) 
		+
		\frac{\tau}{2}\, \piker \big( \fex^{n+1} + \fex^{n}\big)
		\\
		&\ -
		\frac{\tau}{2}\,  \damp \big( \wexc^{n+1} + \wexc^{n} \big) 
		- 
		\frac{\tau}{2} \big( \ucddot^{n+1} + \ucddot^{n} \big)
		-
		\frac{\tau^2}{4} \dampo \piker \big( \fex^{n} - \fex^{n+1}\big) 
		+
		\piker \delta^n_{w,1}
		+
		\dampo \piker \delta^n_1,
	\end{align*}
	where the defect $\delta^n_1$ is defined in the assertion of Lemma~\ref{lem:error_equation_damp_IMEX} and is artificially added to compensate the difference of $\fex^{n}$. \medskip
	
	(c) Recalling the definitions of $\errunCN{n}$ and $\errwnCN{n}$, we obtain with parts (a) and (b), 
	\begin{align*}
		\errunCN{n+1} 
		&= \errunCN{n} +
		\frac{\tau}{2} \big( \errwnCN{n+1} +  \errwnCN{n} \big)
		+ 
		\piker S_1^n 
		+
		\piker \delta^n_{u} -
		\piker \big( \delta^n_1 + \delta^n_2 \big),
		\\
		\errwnCN{n+1}
		&= 
		\errwnCN{n}
		-
		\frac{\tau}{2}\, \calAo\, \big( \errunCN{n+1} + \errunCN{n} \big)
		-
		\frac{\tau}{2}\,  \dampo	\big( \errwnCN{n+1} + \errwnCN{n} \big) 
		+
		\piker S_2^n - \dampo \piker S_1^n 
		\\
		&\quad -
		\frac{\tau}{2}\, \damp \big( \wexc^{n+1} + \wexc^{n} \big) 
		- 
		\frac{\tau}{2} \big( \ucddot^{n+1} + \ucddot^{n} \big)
		+
		\piker \delta^n_{w,1}
		+
		\dampo \piker \delta^n_1 
		-
		\piker r_2 + \dampo \piker  \delta^n_2 
		\\
		&= 
		\errwnCN{n}
		-
		\frac{\tau}{2}\, \calAo\, \big( \errunCN{n+1} + \errunCN{n} \big)
		-
		\frac{\tau}{2}\, \dampo \big( \errwnCN{n+1} + \errwnCN{n} \big) 
		+
		\piker S_2^n - \dampo \piker S_1^n 
		\\
		&\quad 
		+
		\piker \delta^n_{w,1}
		+
		\piker \delta^n_{w,2}
		+
		\dampo \piker \big( \delta^n_1 +  \delta^n_2 \big) 
	\end{align*}
	in $\Vo^*$, where $\delta^n_{w,2} $ satisfies
	\begin{align*}
		\norm{\delta^n_{w,2} }_{\cH} 
		&= \big\| \frac{\tau}{2}\,  \damp	\big( \wexc^{n+1} + \wexc^{n} \big) 
			+ \frac{\tau}{2} \big( \ucddot^{n+1} + \ucddot^{n} \big)
			+ r_2	\big\|_{\cH} 
		\\
		&= \big\| \frac{\tau}{2}\,  \damp	\big( \wexc^{n+1} + \wexc^{n} \big) 
			+ \frac{\tau}{2} \big( \ucddot^{n+1}  + 	\ucddot^{n} \big)
			- \big(	\wc^{n+1} - 	\wc^n 	+ \tau \damp \wc^{n+1/2} \big) \big\|_{\cH} 
		\\
		&= \big\| \tau  \damp \bigl( 
			\wc^{n+1/2} - \tfrac{1}{2}\, ( \wexc^{n+1} + \wexc^{n} )  \bigr) 
			- 
			\bigl( \wc^{n+1} - 	\wc^n  - 	\tfrac{\tau}{2}\, ( \ucddot^{n+1}  + 	\ucddot^{n} ) \bigr) \big\|_{\cH}  
		\\
		&\leq C\, \tau^3 .
	\end{align*}
	We finally define 
	$\delta^n_{w} = \piker ( \delta^n_{w,1}  + \delta^n_{w,2} ) $
	and use the definition of $\Rpm$
	to obtain the relation~\eqref{eq:IMEX_first_order_error}.
\end{proof}
\begin{remark} \label{rem:H_eq_H_ker}
Let us finally comment on the case that $\cH = \cHo$ and the modified method, where $w^{n+1}$ is computed by~\eqref{eq:variant_H_Hker_IMEX}.
We first note that the same proof as in Lemma~\ref{lem:eq_two_step_undamped}
yields the equivalence of the two schemes if we omit the $\calB^* \mu$-terms in all calculations.
Similarly, in the proof of Lemma~\ref{lem:error_equation_damp_IMEX}, the computations until \eqref{eq:u_n_ker_1} are fully analogous, again omitting the $\calB^* \mu$-terms.
From that on, we set 
$\wc^{n+1} = \tfrac{1}{\tau}\, \calB^-\big( g^{n+3/2} - g^{n+1/2} \big)$ 
and $\wo^{n+1} = w^{n+1} - \wc^{n+1}$ (even though $\wo^{n+1}$ might not be in the kernel of $\calB$, but in $\cH = \cHo$). %further omitting $\calB^* \mu$,
Then, the very same calculations yield the error recursion in~\eqref{eq:IMEX_first_order_error}. 
The estimates of Lemma~\ref{lem:Rpm_properties} hold on $\Vo \times \cH$ and it is sufficient that $\wo^{n+1} \in \cH$ without any knowledge on the constraint. In particular, we have the same error bound as in Theorem~\ref{thm:error_damp_IMEX} under the same assumptions.
\end{remark}
%
%
%=============================================================================
%=========  Gautschi Integrator
%=============================================================================
\section{A Gautschi-type Integrator}
\label{sec:Gautschi}
This section is devoted to the extension of Gautschi-type integrators to constrained systems. For this, we restrict ourselves to the damping-free free case, i.e., we set $\damp = 0$. Moreover, we consider the stronger assumption on the nonlinearity, i.e., Assumption~\ref{assRHS}~(a). This is related to the fact that in the proof of Theorem~\ref{th:Gautschi:convergence} a stronger notation of stability is required to prevent order reduction.

As in the previous section, we consider a uniform partition of~$[0,T]$ with constant time step size~$\tau$ and discrete time points~$\tn{n}=n\, \tau$. 
%
%
%=============================================================================
\subsection{Review of the unconstrained case} 
\label{sec:Gautschi:unconstrained}
We recall the concept of Gautschi-type integrators for wave-type systems with bounded operators as well as their extension to unbounded operators. For this, an interpretation of trigonometric functions applied to an operator is needed. 
%
%
%=============================================================================
\subsubsection*{Bounded operators} 
We recall solution strategies for conservative second-order systems as discussed in~\cite{HocL99,Gri02}. Considering the corresponding first-order formulation, we consider the equations 
\begin{align*}
	\dot u(t) - w(t)
	&= 0, \\
	\dot w(t) + \calA u(t)
	&= f(t, u) 
\end{align*}
with a bounded operator $\calA$, which also has a bounded inverse. In terms of operator matrices, this system reads
\begin{align}
\label{eq:PDE:firstOrdNoDamp}
	\begin{bmatrix} \dot u(t) \\ \dot w(t) \end{bmatrix}
	+ \begin{bmatrix} 0 & -\id \\ \calA & 0 \end{bmatrix}
	\begin{bmatrix} u(t) \\ w(t) \end{bmatrix}
	= \begin{bmatrix} 0 \\ f(t, u) \end{bmatrix}, 
\end{align}
which we abbreviate by~$\dot z + \calZ z = \tilde f$ for the unknown~$z\coloneqq [u;w]$, where we use the standard Matlab notation for the concatenation of vectors. The initial data is given by~$z(0) = [u^0;w^0]$. By an application of the variation-of-constants formula~\cite[Ch.~4]{Paz83}, we conclude 
\begin{align*}
	z(t)
	= e^{-t\calZ}z(0) + \int_0^t e^{-(t-s)\calZ} \tilde f(s,z) \ds. 
\end{align*}
Under the assumption that $\calA$ is bounded, the exponential is well-defined and has the form  
\begin{equation}
\label{eq:semigroupNoDamp}
	e^{-t\calZ}
	= \begin{bmatrix} \cos(t \Omega) & \Omega^{-1} \sin(t \Omega) \\ 
		-\Omega\, \sin(t \Omega) & \cos(t \Omega) \end{bmatrix}
	= \begin{bmatrix} \cos(t \Omega) & t\, \sinc(t \Omega) \\ 
		-t\calA\, \sinc(t \Omega) & \cos(t \Omega) \end{bmatrix} 
\end{equation}
with $\Omega \coloneqq \calAh$; cf.~\cite[Sect.~3.1]{Gri02}. Although written in terms of the square root of~$\calA$, the functions $\sinc$ and $\cos$ given by the Taylor series 
\begin{equation}
\label{eq:TaylorSincCos}
	\sinc x 
	= \sum_{k=0}^{\infty} \frac {(-x^2)^k}{(2k+1)!}, \qquad 
	\cos x
	= \sum_{k=0}^{\infty} \frac {(-x^2)^k}{(2k)!}
\end{equation}
only depend on even powers of the input such that $\sinc(t\Omega)$ and $\cos(t\Omega)$ are well-defined even if~$\Omega$ is not. 
%Note that we use here the~$\sinc$-function, which is defined by~$\sinc(x)\coloneqq \frac{\sin(x)}{x}$ for $x\neq 0$ and~$\sinc(0)=1$. 
By formula~\eqref{eq:semigroupNoDamp}, the solution~$u$ being equal to the first component of~$z$ can be written as 
\begin{align}
\label{eq:slnUnconstrained}
	u(t)
	%  = \cos(t \Omega) u^0 + \Omega^{-1} \sin(t \Omega) w^0
	%  + \int_0^t \Omega^{-1} \sin((t-s)\, \Omega) f(s,u) \ds. 
	= \cos(t \Omega)\, u^0 + t\, \sinc(t \Omega)\, w^0
	+ \int_0^t (t-s)\, \sinc((t-s)\Omega)\, f(s,u) \ds.
\end{align}
%
%% integration of sinc-integral 
For further calculations, we commemorate the identity 
\[
	\frac{1-\cos x}{x^2}
	= \int_0^1 x^{-1} \sin \big( (1-\eta)\, x \big) \deta
	= \int_0^1 (1-\eta)\, \sinc \big( (1-\eta)\, x \big) \deta. 
\]
Inserting $x=\tau\,\Omega$ and using a substitution to solve the integral, we get 
\[
	\calA^{-1} \big[ 1 - \cos(\tau\Omega) \big]
	= \int_0^1 \tau^2\, (1-\eta)\, \sinc \big((1-\eta)\, \tau \Omega \big) \deta  
	= \int_0^\tau\, (\tau-s)\, \sinc\big((\tau-s)\, \Omega \big) \ds. 
\]
With the solution formula~\eqref{eq:slnUnconstrained}, we conclude for the solution of~\eqref{eq:PDE:firstOrdNoDamp} that
\begin{align*}
	u(t+\tau) + u(t-\tau)
	= 2\, \cos(\tau \Omega)\, u(t) 
	+ 2\int_0^\tau (\tau-s)\, \sinc((\tau-s)\, \Omega)\, f(s,u(s)) \ds.
\end{align*}
For a constant right-hand side, i.e., for~$f(t,u)\equiv f$, this further simplifies to 
\begin{align*}
	u(t+\tau) + u(t-\tau)
	&= 2\, \cos(\tau \Omega)\, u(t) + 2\, \calA^{-1} \big[ 1 - \cos(\tau\Omega) \big]\, f \\
	&= 2\, \cos(\tau\Omega) \big[ u(t) - \calA^{-1} f\big] + 2\, \calA^{-1} f. 
\end{align*}
%
%% simple Gautschi scheme
In the semi-linear case, we may approximate the integral term by the simplest integration scheme, i.e., we replace~$f(t,u)$ by~$f(\tn{n},u^n)$ with~$u^n\approx u(\tn{n})$. Inserting this in the previous calculation yields the integration scheme 
\begin{align}
	\label{eq:Gautschi:noDamp:noConstraint}
	u^{n+1} 
	= 2\, \cos(\tau\Omega) \big[ u^n - \calA^{-1} f(\tn{n},u^n)\big] 
	+ 2\, \calA^{-1} f(\tn{n},u^n) - u^{n-1}.
\end{align}
Following~\cite{HocL99}, this may also be written in the alternative form 
\[
	u^{n+1} - 2u^n + u^{n-1}
	= 2\, \calA^{-1} \big(1 - \cos(\tau\Omega)\big)\, \big[ f(\tn{n},u^n) - \calA u^n\big]
\]
or, due to $2\, (1-\cos(x)) / x^2 = \sinc^2(\tfrac x2)$, as 
\[
	u^{n+1} - 2u^n + u^{n-1}
	= \tau^2 \sinc^2(\tfrac\tau2 \Omega)\, \big[ f(\tn{n},u^n) - \calA u^n \big].
\]
%
%
%=============================================================================
\subsubsection*{Extension to unbounded operators}
So far, we have assumed that the operator~$\calA$ is bounded. Hence, the exponential as well as~$\cos(t\Omega)$ are well-defined, e.g., using the Taylor series.  %~$\cos(x) = \sum_{k=0}^\infty (-1)^k x^{2k} / (2k)!$. 
In the here considered PDAE setting, $\calA$ usually denotes an unbounded differential operator such that a different approach is needed. For this, consider the situation as introduced in Section~\ref{sec:PDAE:prelim} with the Hilbert spaces~$\V, \cH, \Q$ and an operator~$\calA$ as in Lemma~\ref{lem:existenceLinUnconstrained}. 

In order to make the numerical scheme~\eqref{eq:Gautschi:noDamp:noConstraint} also valid for unbounded operators, we need an interpretation of the trigonometric terms. For this, we assume for a minute that $\calA$ is elliptic. 
% --> will only be applied to A_ker which is assumed to be elliptic
% Otherwise add $\kappa\, \id$ to both sides of the system with $\kappa$ sufficiently large in order to guarantee that $\calA + \kappa \id$ is elliptic.  
% alternativ: move $\calA_2$ to the right-hand side and consider $\calA_1$
In this case, we can interpret~$\cos(t\Omega)\,v$ as the solution of the linear and homogeneous equation 
\begin{align}
	\label{eq:cosOmega}
	\ddot u(t) + \calA u(t) = 0, \qquad
	u(0)=v,\ 
	\dot u(0)=0.
\end{align}
Moreover, we can understand~$t\, \sinc(t \Omega)\,v$ as the solution of 
\begin{align}
	\label{eq:sincOmega}
	\ddot u(t) + \calA u(t) = 0, \qquad
	u(0)=0,\ 
	\dot u(0)=v.
\end{align}
Note that both systems are well-posed by Lemma~\ref{lem:existenceLinUnconstrained}. 
%
%
%=============================================================================
\subsection{An implicit solution formula}
\label{sec:Gautschi:slnformula}
We come back to the constrained case and consider the PDAE~\eqref{eq:PDAE}. In the linear case, i.e., if~$f(t,u)\equiv f(t)$, we can explicitly write down the solution using the decomposition of~$\V$ from~\eqref{eq:decompV}, the first-order formulation, and the variation-of-constants formula; cf.~\cite{EmmM13}. This solution formula is also applicable to the general semi-linear case, leading to an implicit formula. As in the unconstrained case, quadrature rules then yield numerical schemes. 

%% restriction to Vo
Following~\eqref{eq:decompV}, we decompose the solution into~$u = \uo + \uc$ with~$\uo\colon[0,T] \to \Vo$ and~$\uc = \calB^-g(t)\in\Vc \colon[0,T] \to \Vc$. Then the restriction of~\eqref{eq:PDAE:a} to test functions in~$\Vo$ yields 
\begin{align}
	\label{eq:uker}
	\uoddot + \calAo \uo
	= f(t, \uo+\uc) - \ucddot\qquad\text{in } \Vo^*.
\end{align}
Note that there is no term~$\calA \uc$ on the right-hand side due to the particular definition of~$\Vc$. Since this is an unconstrained system with an elliptic differential operator~$\calAo$, the variation-of-constants formula can be applied to the corresponding first-order formulation.  
With the operator $\piker\colon \cH \to \cHo$ introduced in Lemma~\ref{lem:projection}, which is simply based on the restriction of test functions, we conclude that
\begin{align}
	\label{eq:slnFormulaPDAE}
	u(t) 
	&= \uc(t) + \uo(t) \\
	&= \calB^-g(t) + \cos(t\Omo) \uo(0) + t \sinc(t\Omo) \uodot (0) \notag \\
	&\hspace{1cm}+ \int_0^t (t-s)\, \sinc((t-s)\, \Omo)\, \piker \big[ f(s, \uo(s)+\calB^-g(s)) - \calB^-\ddot g(s) \big] \ds. \notag
\end{align}
Here, we use the notation~$\uo(0) \coloneqq u^0 - \calB^-g(0)$, $\uodot(0) \coloneqq w^0 - \calB^- \dot g(0)$ and~$\Omo \coloneqq \calAoh$. Note that this solution formula is {\em implicit} in the sense that the solution also appears on the right-hand side within the integral term. Nevertheless, this representation is the basis for the derivation of a Gautschi-type integrator as we will see in the following.  
%
%
%=============================================================================
\subsection{A Gautschi-type integration scheme}
\label{sec:Gautschi:scheme}
Considering the solution formula~\eqref{eq:slnFormulaPDAE} at times $\tn{n+1}$ and $\tn{n-1}$ (and omitting the restriction operator~$\piker$ for simplicity), we get 
\begin{align*}
	u(\tn{n+1}) &+ u(\tn{n-1})
	= \calB^-g(\tn{n+1}) + \calB^-g(\tn{n-1}) + 2\cos(t\Omo) \big[u(\tn{n}) - \calB^-g(\tn{n}) \big] \\
	&\qquad+ \int_0^\tau (\tau-s)\, \sinc((\tau-s)\, \Omo)\, \big[ f(\tn{n}+s, u(\tn{n}+s)) - \calB^-\ddot g(\tn{n}+s) \\ 
	&\hspace{6.3cm}+ f(\tn{n}-s, u(\tn{n}-s)) - \calB^-\ddot g(\tn{n}-s) \big] \ds. 
\end{align*}
Now, we fix the nonlinear term in the integral by its value on the left limit, i.e., by~$f(\tn{n}, u^n) - \calB^-\ddot g^n$. With
\[
	b^n 
	\coloneqq \calAo^{-1}\, \piker \big[ f(\tn{n}, u^n) - \calB^-\ddot g^n \big],
\]
we obtain by a calculation similarly as in Section~\ref{sec:Gautschi:unconstrained} that  
\begin{align*}
	u^{n+1}
	= \calB^-g^{n+1} + \cos(\tau\Omo) (u^n - \calB^- g^n) 
	+ \tau \sinc(\tau\Omo) (\dot u^n - \calB^- \dot g^n) 
	- \cos(\tau\Omo)\, b^n + b^n.
\end{align*}
Considering the analog formula for~$u^{n-1}$, we obtain the scheme 
\begin{align}
\label{eq:Gautschi:PDAE}
	u^{n+1} 
	%
%	&= \calB^-g^{n+1}
%	- ( u^{n-1} - \calB^-g^{n-1} )
%	+ 2\, \cos(\tau\Omo) ( u^n - \calB^-g^{n} ) 
%	- 2\, \cos(\tau\Omo) b^n 
%	+ 2\, b^n  \notag \\
	%
	= \calB^-g^{n+1}
	- ( u^{n-1} - \calB^-g^{n-1}) 
	+ 2 \cos(\tau\Omo) ( u^n - \calB^-g^n - b^n) + 2\, b^n. 
\end{align}	
\begin{remark}
The proposed numerical scheme~\eqref{eq:Gautschi:PDAE} is in line with Section~\ref{sec:Gautschi:unconstrained}. This means that we recover the scheme~\eqref{eq:Gautschi:noDamp:noConstraint} if we leave out the constraint. 
\end{remark}
We would like to emphasize that the Gautschi-type integrator~\eqref{eq:Gautschi:PDAE} is a two-step scheme. Hence, we need to provide approximations~$u^0$ and~$u^1$ to apply the scheme. For~$u^0$ we can directly insert~$u(0)$. For the second value, we need to ensure~$u^1 = u(\tau) + \calO(\tau^3)$ in order to allow a second-order accuracy of the scheme; see Theorem~\ref{th:Gautschi:convergence} below. In fact, due to the second order structure of the problem, in the error accumulation we indeed loose one order of $\tau$ on the initial value and two orders in the defect. As an example, one may use a Taylor series approach in combination with~\eqref{eq:uker}, leading to  
\[
  	u^1
	\coloneqq u^0 + \tau\, w^0 + \tfrac12 \tau^2\, \big[ f(0, u^0) - \calAo (u^0-\calB^-g^{0}) \big]. 
\]
Note, however, that this requires a sufficiently regular $u^0$. 
%
%
%=============================================================================
\subsection{Convergence analysis}
\label{sec:Gautschi:convergence}
As preparation for the upcoming convergence proof, we summarize some facts on $\cos(t \Omega)$, $\sinc(t \Omega)\colon [0,\infty) \to \calL(\calV)$. Recall that these time-dependent operators should be understood as part of the solution mapping; see~\eqref{eq:cosOmega} and~\eqref{eq:sincOmega}.
\begin{lemma}
\label{lem:properties:cos}
Let $\calA\in \calL(\calV,\calV^*)$ satisfy the assumptions of Lemma~\ref{lem:existenceLinUnconstrained} and set~$\Omega=\calAh$. %Suppose that $\cos(t \Omega)$, $t\sinc(t\Omega)$, $t\calA\sinc(t\Omega)$ are given as the solution operator as in~\eqref{eq:semigroupNoDamp}. 
Then the operators $\cos(t \Omega)$, $t\sinc(t\Omega)$, and~$t\calA\sinc(t\Omega)$ are extendable to $t \in \R$ and it holds that 
\begin{enumerate}[label=\alph*)]
\item $\cos(t \Omega)$ is bounded as operator from $\calV \to \calV$ and $\calH \to \calH$ for every $t \in \R$, \label{item:properties:cos_1} \\[-0.75em]
\item $\cos((t_1 + t_2)\,\Omega) = \cos(t_1 \Omega)\cos(t_2 \Omega) - t_1\sinc(t_1 \Omega)\, t_2\calA \sinc(t_2 \Omega)$ for every $t_1,t_2 \in \R$,\label{item:properties:cos_2} \\[-0.75em]
\item $\id = \cos^2(t\Omega) + t\sinc(t \Omega)\,t\calA\sinc(t \Omega)$ for every $t \in \R$. 
\label{item:properties:cos_3}
\end{enumerate}
\end{lemma}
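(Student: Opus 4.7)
The plan is to interpret the three objects $\cos(t\Omega)$, $t\sinc(t\Omega)$, and $t\calA\sinc(t\Omega)$ through the first-order reformulation~\eqref{eq:PDE:firstOrdNoDamp} and to exploit the group structure of its solution operator. The first step is to establish extendability to all $t\in\R$. Since $\damp=0$, the equation $\ddot u+\calA u=0$ is invariant under $t\mapsto -t$, so Lemma~\ref{lem:existenceLinUnconstrained} applied on $[-T,0]$ and on $[0,T]$ yields a unique global solution for each admissible pair of initial data. Accordingly, $\cos(t\Omega)v$ is defined as the value at time $t$ of the solution to~\eqref{eq:cosOmega}, $t\sinc(t\Omega)v$ as the solution to~\eqref{eq:sincOmega}, and $t\calA\sinc(t\Omega)v$ is identified with $-\ddot u(t)$ for that latter solution.

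For part~\ref{item:properties:cos_1}, given $u^0\in\calV$, the function $u(t)=\cos(t\Omega)u^0$ solves~\eqref{eq:cosOmega}, so applying estimate~\eqref{eq:stabEstimate:noConst} with $f\equiv 0$ and $w^0=0$ gives $\|u(t)\|_{\calA_1}\le e^{\kappa|t|/2}\|u^0\|_{\calA_1}$; ellipticity of $\calA_1$ on $\calV$ makes $\|\cdot\|_{\calA_1}$ equivalent to $\|\cdot\|_\calV$, which establishes the $\calV\to\calV$ bound. For the $\calH\to\calH$ bound, I would first treat the self-adjoint subcase $\calA=\calA_1$ via the first-order operator matrix $\calZ$ on the energy space, which is skew-adjoint with respect to the $\calA_1$-inner product and hence generates a unitary group; a standard transposition argument lifts this group to $\calH\times\calV^*$, and the $(1,1)$-entry reads $\cos(t\Omega)$ as a bounded operator on $\calH$. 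The case $\calA_2\ne 0$ is then handled as a bounded perturbation of skew-adjoint through Duhamel's formula and a Gronwall argument, yielding an additional exponential factor $e^{c|t|}$.

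For part~\ref{item:properties:cos_2}, let $U(t)$ denote the operator matrix in~\eqref{eq:semigroupNoDamp}. Since~\eqref{eq:PDE:firstOrdNoDamp} with $\tilde f=0$ is autonomous and uniquely solvable on all of $\R$, the solution operators satisfy the group law $U(t_1+t_2)=U(t_1)\,U(t_2)$ for all $t_1,t_2\in\R$. Reading off the $(1,1)$-entry on both sides of this matrix identity yields precisely the claimed addition formula. Part~\ref{item:properties:cos_3} is then immediate: applying part~\ref{item:properties:cos_2} with $t_1=t$ and $t_2=-t$ and using that both power series in~\eqref{eq:TaylorSincCos} contain only even powers of their argument (so that $\cos(-t\Omega)=\cos(t\Omega)$ and $(-t)\sinc(-t\Omega)=-t\sinc(t\Omega)$), together with $\cos(0)=\id$, produces the identity in~\ref{item:properties:cos_3}.

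The main obstacle is the $\calH\to\calH$ bound in the presence of the non-self-adjoint part $\calA_2$: estimate~\eqref{eq:stabEstimate:noConst} naturally controls $\|\dot u(t)\|_\calH$ and $\|u(t)\|_{\calA_1}$ but does not directly produce an $\calH$-norm bound of $u$ in terms of the $\calH$-norm of $u^0$ alone, and purely $\calH$-valued initial data lies below the regularity required by Lemma~\ref{lem:existenceLinUnconstrained}. Circumventing this either requires lifting the group to $\calH\times\calV^*$ and treating $\calA_2$ via Duhamel as sketched above, or an interpolation/duality argument between the $\calV\to\calV$ bound and its adjoint $\calV^*\to\calV^*$ bound obtained from the wave equation associated with $\calA^*$.
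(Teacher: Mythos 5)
Your treatment of parts \ref{item:properties:cos_2} and \ref{item:properties:cos_3}, of the $\calV\to\calV$ bound in \ref{item:properties:cos_1}, and of the extension to negative $t$ coincides with the paper's proof: group law of $e^{-t\calZ}$ read off in the $(1,1)$-entry, the choice $t_1=t$, $t_2=-t$ together with evenness/oddness of the entries, the stability estimate \eqref{eq:stabEstimate:noConst}, and time-reversal invariance. The one place where you diverge is exactly the point you flag as the main obstacle, the $\calH\to\calH$ bound for $\cos(t\Omega)$, and there your proposal is not yet a proof. The paper disposes of this with a short trick: for $u^0\in\calV$ set $u(t)=\cos(t\Omega)u^0$ and $y(t)=\int_0^t u(s)\ds$; since $\calA y(t)=-\int_0^t\ddot u(s)\ds=-\dot u(t)$ (using $\dot u(0)=0$) and $\ddot y=\dot u$, the function $y$ solves the homogeneous equation with data $y(0)=0$, $\dot y(0)=u^0$, so \eqref{eq:stabEstimate:noConst} gives $\|\cos(t\Omega)u^0\|_{\calH}=\|\dot y(t)\|_{\calH}\le e^{\kappa t/2}\|u^0\|_{\calH}$, and one concludes by density of $\calV$ in $\calH$. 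No extrapolation spaces, no perturbation argument.

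Both of your proposed workarounds have a concrete problem when $\calA_2\neq 0$. The Duhamel route on the extrapolated space $\calH\times\calV^*$ requires the perturbation $\begin{bmatrix}0&0\\ \calA_2&0\end{bmatrix}$ to act on that space, i.e.\ $\calA_2$ to be defined on $\calH$; but Lemma~\ref{lem:existenceLinUnconstrained} only provides $\calA_2\in\calL(\calV,\calH^*)$, which does not extend canonically to $\calL(\calH,\calV^*)$ (its adjoint $\calA_2^*$ does live there, but is a different operator). The duality/interpolation route has the mirror-image defect: the adjoint wave equation involves $\calA_1+\calA_2^*$ with $\calA_2^*\in\calL(\calH,\calV^*)$, which need not satisfy the hypothesis $\calA_2^*\in\calL(\calV,\calH^*)$ of Lemma~\ref{lem:existenceLinUnconstrained}, and you would additionally need to justify that $\calH$ is the complex-interpolation midpoint of $(\calV,\calV^*)$. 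So as written, part \ref{item:properties:cos_1} on $\calH$ is a genuine gap in your argument; replacing it with the antiderivative trick above closes it with the tools already at hand.
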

\begin{proof} 
\ref{item:properties:cos_1} For non-negative $t$ and as an operator in~$\calL(\calV)$ the assertion follows immediately from estimate~\eqref{eq:stabEstimate:noConst}. In particular, we get the bound~$1$ in the symmetric case and $e^{\kappa t/2}$ in the general case with $\calA_2 \neq 0$. For $u(t) = \cos(t \Omega)\, u^0$, the function $y(t) \coloneqq \int_0^t u(s) \ds$ solves~\eqref{eq:PDE:firstOrdNoDamp} with homogeneous right-hand side and initial values $y(0) = 0$ and $\dot{y}(0) = u^0$. In particular, it holds that $\dot{y} = u$. Therefore, the assertion follows again from~\eqref{eq:stabEstimate:noConst} by extending the operator from~$\calL(\calV,\calH)$ to $\calL(\calH)$. Finally, for negative~$t$ we notice that system~\eqref{eq:PDE:firstOrdNoDamp} with homogeneous right-hand side is invariant under time reversion such that $\cos(-t \Omega)= \cos(t \Omega)$. \medskip

% b)
\ref{item:properties:cos_2} Using the semigroup~\eqref{eq:semigroupNoDamp} we have
\begin{align*}
	\begin{bmatrix}
	\cos((t_1+t_2)\,\Omega)\, u^0\\ *
	\end{bmatrix} 
	&= \big(e^{-t_1\calZ} e^{-t_2\calZ}\big)
	\begin{bmatrix} u^0\\ 0	\end{bmatrix}\\ 
	&= \begin{bmatrix} \cos(t_1\Omega)\cos(t_2\Omega) u^0 - t_1\sinc(t_1 \Omega)\, t_2 \calA\sinc(t_2 \Omega) u^0 \\ * \end{bmatrix}
\end{align*}
for all $u^0 \in \calV$. Analogously to~\ref{item:properties:cos_1}, this equation holds for every $u^0 \in \calH$ as well.

% c)
\ref{item:properties:cos_3} Since~\eqref{eq:PDE:firstOrdNoDamp} is invariant under time reversion, it holds that $t\calA\sinc(t\Omega)$ is odd. Therefore, it holds that 
\begin{equation*}
	\begin{bmatrix} u^0\\ 0 \end{bmatrix} 
	= (e^{t\calZ} e^{-t\calZ}) 
	\begin{bmatrix} u^0\\ 0 \end{bmatrix} 
	= \begin{bmatrix} \cos^2(t\Omega) u^0 + t\sinc(t\Omega)\, t\calA\sinc(t\Omega) u^0 \\ 0 \end{bmatrix},
\end{equation*}
which completes the proof. 
\end{proof}
Before we investigate the convergence order we state the following auxiliary lemma. 
\begin{lemma}\label{lem:Xn}
Let $t>0$ and $\cos(t\Omega)$ be defined as in Lemma~\ref{lem:properties:cos}. Suppose that $X_0 \coloneqq \id$, $X_1\coloneqq 2 \cos(t \Omega)$, and 
\[
	X_n \coloneqq 2 \cos(t \Omega)X_{n-1}-X_{n-2}\qquad
	\text{for }	n\ge 2.
\]
For $n\ge 0$ it then holds that
\begin{align}
\label{eq:formulaXn}
	X_n 
	= \begin{cases} \id + 2 \sum_{k=1}^{n/2} \cos(2k t \Omega), 
	 & \text{if $n$ is even}\\
	 2 \sum_{k=1}^{(n+1)/2} \cos\big((2k -1)t \Omega\big), 
	 & \text{if $n$ is odd}
	\end{cases}.
\end{align}
\end{lemma}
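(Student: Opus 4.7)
The plan is to proceed by strong induction on $n$, using the operator addition formula in Lemma~\ref{lem:properties:cos}\ref{item:properties:cos_2} to upgrade the scalar product-to-sum identity to the operator level. The base cases $n=0,1$ are immediate from the definitions of $X_0, X_1$ and the right-hand side of~\eqref{eq:formulaXn}.

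The main algebraic tool I would first establish is the operator identity
\begin{equation}\label{eq:prodtosum}
	2\cos(t\Omega)\, \cos(m t \Omega)
	= \cos\bigl((m+1)\,t\,\Omega\bigr) + \cos\bigl((m-1)\,t\,\Omega\bigr)
	\qquad\text{for all } m \in \Z.
\end{equation}
To derive this, I apply Lemma~\ref{lem:properties:cos}\ref{item:properties:cos_2} twice, once with $(t_1,t_2)=(t,mt)$ and once with $(t_1,t_2)=(-t,mt)$, and then add the two identities. The cross term $t\sinc(t\Omega)\, mt\calA\sinc(mt\Omega)$ cancels because $\cos(\cdot\,\Omega)$ is even in its argument, as noted in the proof of Lemma~\ref{lem:properties:cos}\ref{item:properties:cos_1}, whereas $s\mapsto s\sinc(s\Omega)$ is odd (either by the even Taylor series for $\sinc$, or by time-reversal symmetry of~\eqref{eq:sincOmega}); hence $\cos(-t\Omega)=\cos(t\Omega)$ and $(-t)\sinc(-t\Omega) = -t\sinc(t\Omega)$, so the two cross terms have opposite signs.

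Given~\eqref{eq:prodtosum}, the induction step is a routine reindexing. Assume the formula holds up to $n-1$, and consider the case where $n$ is even (the odd case is analogous). Then $X_{n-1} = 2\sum_{k=1}^{n/2} \cos\bigl((2k-1)\,t\,\Omega\bigr)$, and applying~\eqref{eq:prodtosum} with $m=2k-1$ gives
\begin{equation*}
	2\cos(t\Omega)\, X_{n-1}
	= 2\sum_{k=1}^{n/2} \Bigl[ \cos(2k\,t\,\Omega) + \cos\bigl((2k-2)\,t\,\Omega\bigr) \Bigr]
	= 2\id + 2\sum_{k=1}^{n/2} \cos(2k\,t\,\Omega) + 2\sum_{k=1}^{(n-2)/2} \cos(2k\,t\,\Omega),
\end{equation*}
where I have split off the $k=1$ term $\cos(0)=\id$ from the second sum and shifted its index. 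Subtracting $X_{n-2} = \id + 2\sum_{k=1}^{(n-2)/2}\cos(2k\,t\,\Omega)$ cancels the last sum together with one copy of $\id$, leaving precisely $X_n = \id + 2\sum_{k=1}^{n/2} \cos(2k\,t\,\Omega)$, as claimed.

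I expect the main obstacle to be a conceptual rather than computational one: rigorously justifying~\eqref{eq:prodtosum} at the operator level, in particular the parity assertions for $\cos(s\Omega)$ and $s\sinc(s\Omega)$ when $\calA$ is unbounded and not necessarily self-adjoint. Once this is secured via Lemma~\ref{lem:properties:cos}, the remainder is only careful index bookkeeping in the two parity cases.
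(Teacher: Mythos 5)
Your proof is correct and follows essentially the same route as the paper: induction on $n$, with the addition formula of Lemma~\ref{lem:properties:cos}\,\ref{item:properties:cos_2} (applied at $\pm t$, using the evenness of $\cos(s\Omega)$ and oddness of $s\sinc(s\Omega)$) supplying the product-to-sum identity that drives the index shift. Isolating that identity as a separate step is merely a cleaner organization of the same argument the paper carries out inline.
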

\begin{proof}
The proof follows by mathematical induction. The assertion is clear for $X_0$ and $X_1$. If~$n$ is even, then we have  
\begin{align*}
	X_n 
	&\overset{\phantom{\text{Lem}.~\ref{lem:properties:cos} a)}}{=} 2 \cos(t \Omega) X_{n-1} - X_{n-2}\\
	&\overset{\phantom{\text{Lem}.~\ref{lem:properties:cos} a)}}{=} 4 \cos(t \Omega) \sum_{k=1}^{n/2} \cos\big((2k -1)\,t\Omega\big) - \id - 2 \sum_{k=1}^{n/2-1} \cos(2k t \Omega)\\
	&\overset{\text{Lem}.~\ref{lem:properties:cos}\,\ref{item:properties:cos_2}}{=}  2\,\sum_{k=2}^{n/2} \Big[ \cos(t \Omega) \cos\big((2k -1)\,t\Omega\big) -  
	\sin\big((2k-1)\,t\Omega\big)\sin(t\Omega) \Big] + 4 \cos^2(t\Omega) - \id\\
	&\overset{\text{Lem}.~\ref{lem:properties:cos}\,\ref{item:properties:cos_2}}{=}  2\,\sum_{k=1}^{n/2} \cos(2kt \Omega) + 2 \cos^2(t\Omega) + 2 \sin^2(t\Omega) - \id\\
	&\overset{\text{Lem}.~\ref{lem:properties:cos}\,\ref{item:properties:cos_3}}{=}  2\,\sum_{k=1}^{n/2} \cos(2kt \Omega) + \id.
\end{align*}
Note that we use $2kt = (2k+1)t + (-t)$ within the first application of Lemma~\ref{lem:properties:cos}\,\ref{item:properties:cos_2} in combination with an index shift. 
Moreover, we would like to emphasize at this point that $\sin(t\Omega)$ may not be defined. However, due to $x\sinc(x) = \sin(x)$, the above expressions only contain $\Omega^2$, which makes it well-defined. 
If, on the other hand, $n$ is odd, then we get 
\begin{align*}
	X_n 
	&\overset{\phantom{\text{Lem}.~\ref{lem:properties:cos} a)}}{=} 2 \cos(t \Omega) X_{n-1} - X_{n-2}\\
	&\overset{\phantom{\text{Lem}.~\ref{lem:properties:cos} a)}}{=} 2 \cos(t \Omega) \Big( \id + 2 \sum_{k=1}^{(n-1)/2} \cos(2kt \Omega)\Big) - 2 \sum_{k=1}^{(n-1)/2} \cos\big((2k -1)\,t\Omega\big)\\
	&\overset{\text{Lem}.~\ref{lem:properties:cos}\,\ref{item:properties:cos_2}}{=}  2 \cos(t \Omega) + 2 \sum_{k=1}^{(n-1)/2} \Big[ \cos(t \Omega)\cos(2k t\Omega)-\sin(t \Omega)\sin(2k t\Omega) \Big]\\
	&\overset{\text{Lem}.~\ref{lem:properties:cos}\,\ref{item:properties:cos_2}}{=}  2 \cos(t \Omega) + 2\sum_{k=1}^{(n-1)/2} \cos\big((2k+1)\,t\Omega\big)\\
	&\overset{\phantom{\text{Lem}.~\ref{lem:properties:cos} a)}}{=} 2\sum_{k=1}^{(n+1)/2} \cos\big((2k-1)\,t\Omega\big).
	\qedhere
\end{align*}
\end{proof}
We now turn to the second main result of this paper, namely the proof of second-order convergence of the proposed two-step Gautschi-type integrator~\eqref{eq:Gautschi:PDAE}. 
\begin{theorem}[Second-order convergence, Gautschi-type integrator]
\label{th:Gautschi:convergence}
Let Assumptions~\ref{assB}, \ref{assA}, and~\ref{assRHS}~(a)
%Ass {assRHS} can be relaxed to Lipschitz continuous with constants~$L_j > 0$ at time $\tn{j}$, i.e.,  
%%
%\[
%\|f(\tn{j},v_1)-f(\tn{j},v_2)\|_{\calH^*} 
%\le L_j\, \|v_1-v_2\|_{\calH} 
%\]
%%
%for all $1\le j\le n$ and $v_1, v_2\in \cH$. 
be satisfied and assume consistent initial data $u^0 \in \calV$, $w^0 \in \calH$ leading to $u^1$ satisfying~$\|u^1-u(\tn{1})\|_{\calH} \le \tau^3 E_\text{init}$. Further, assume right-hand sides 
\[
	f(\,\cdot\,, u(\cdot)) \in C^2([0,T], \cH), \qquad 
	g\in C^4([0,T], \Q^*).
\]
Then the Gautschi-type integrator~\eqref{eq:Gautschi:PDAE} is second-order accurate with the error estimate  
\[
	\|u^n - u(\tn{n})\|_{\cH}
	\le \tau^2\, \tn{n}\, e^{\kappa \tn{n}} \cosh(\tn{n} L^{\sfrac 12})\,  \big( E_\text{init} + \tn{n} C_{f,g} \big)
	+ \calO(\tau^3)
\]
with
\[
	C_{f,g} 
	\coloneqq \big\| \ddot f(\,\cdot\,,u)\,\big\|_{L^\infty(0,\tn{n};\calH^*)} 
	+ C_{\V\to\cH}C_{\calB^-} \big\| g^{(4)} \big\|_{L^\infty(0,\tn{n};\calQ^*)}
\]
including the embedding constant~$C_{\V\to\cH}$, the continuity constant~$C_{\calB^-}$ of the operator~$\calB^-$, $\kappa = C_2 / \sqrt{\mu_1}$ with the continuity constant of~$\calA_2$ and the ellipticity constant of~$\calA_1$, and $L \coloneqq \exp(\kappa\, \tn{n}) L_{\cH}$. 
% $L \coloneqq \exp(\kappa\, \tn{n}) \max_{j=1,\ldots,n} L_j$.  
\end{theorem}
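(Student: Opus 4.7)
The plan is to follow the classical Gautschi-type error analysis of~\cite{HocL99,Gri02}, transferred to the PDAE setting via the decomposition $\V = \Vo\oplus\Vc$ of Section~\ref{sec:PDAE:prelim}. Since both~\eqref{eq:Gautschi:PDAE} and the exact solution reproduce the complementary part $\uc(t) = \calB^{-}g(t)$ without error, the global error reduces to a purely kernel error
\[
	e^n \coloneqq u^n - u(\tn{n}) \in \Vo,
\]
so only a bound for $\{e^n\}$ in the $\cH$-norm has to be established. Rewriting~\eqref{eq:Gautschi:PDAE} in the equivalent form $\uo^{n+1} - 2\cos(\tau\Omo)\uo^n + \uo^{n-1} = 2(\id-\cos(\tau\Omo))\,b^n$ exposes the classical Gautschi two-step structure on the kernel.

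First, I would derive a matching two-step identity for the exact solution. Applying the variation-of-constants formula~\eqref{eq:slnFormulaPDAE} symmetrically around $\tn{n}$, and using that $\cos(t\Omo)$ is even and $t\sinc(t\Omo)$ is odd in $t$ (Lemma~\ref{lem:properties:cos}), one obtains
\[
	\uo(\tn{n+1}) - 2\cos(\tau\Omo)\uo(\tn{n}) + \uo(\tn{n-1}) = \int_0^{\tau}(\tau-s)\sinc((\tau-s)\Omo)\,\piker\bigl[\widetilde f(\tn{n}+s) + \widetilde f(\tn{n}-s)\bigr]\ds ,
\]
with $\widetilde f(t) \coloneqq f(t,u(t)) - \calB^{-}\ddot g(t)$. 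Taylor-expanding the symmetric combination as $\widetilde f(\tn{n}+s) + \widetilde f(\tn{n}-s) = 2\widetilde f(\tn{n}) + s^{2} R_n(s)$ with $\|R_n\|_{L^{\infty}(0,\tau;\cH)} \lesssim C_{f,g}$, and invoking the identity $\calAo^{-1}[\id - \cos(\tau\Omo)] = \int_0^{\tau}(\tau-s)\sinc((\tau-s)\Omo)\ds$, extracts exactly the cosine-term $2(\id-\cos(\tau\Omo))\,b(\tn{n})$ with $b(\tn{n})$ the exact analogue of $b^n$, and leaves a defect $d^n$ of size $\|d^n\|_{\cH} \lesssim \tau^{4} e^{\kappa\tau} C_{f,g}$. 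The operator bounds $\|\cos(t\Omo)\|_{\cH\to\cH}$, $\|\sinc(t\Omo)\|_{\cH\to\cH}\lesssim e^{\kappa t/2}$ entering these estimates are the ones implicit in the proof of Lemma~\ref{lem:properties:cos}\,\ref{item:properties:cos_1}, obtained from the stability estimate of Lemma~\ref{lem:existenceLinUnconstrained}.

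Subtracting the two identities yields the error recursion
\[
	e^{n+1} - 2\cos(\tau\Omo)\,e^n + e^{n-1} = r^n, \qquad r^n = 2(\id-\cos(\tau\Omo))\bigl(b^n-b(\tn{n})\bigr) - d^n .
\]
By Assumption~\ref{assRHS}\,(a) and the same integral identity, $\|2(\id-\cos(\tau\Omo))(b^n-b(\tn{n}))\|_{\cH}\lesssim \tau^{2} L_{\cH} e^{\kappa\tau}\|e^n\|_{\cH}$. Together with $e^0 = 0$ and $\|e^1\|_{\cH}\le \tau^3 E_\text{init}$, I would resolve the recursion through the Duhamel-type representation $e^n = X_{n-1}\,e^1 + \sum_{k=1}^{n-1} X_{n-1-k}\, r^k$, with the Chebyshev-like operators $X_k$ of Lemma~\ref{lem:Xn}. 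Bounding every cosine-term in the explicit formula~\eqref{eq:formulaXn} by Lemma~\ref{lem:properties:cos}\,\ref{item:properties:cos_1} gives $\|X_k\|_{\cH\to\cH} \lesssim (k+1)\,e^{k\kappa\tau/2}$, after which a weighted discrete Gronwall inequality of the type $a_n\le\alpha_n + \beta\sum_{k=1}^{n-1}(n-k)\tau\,a_k$, whose solution grows like $\alpha_n\cosh(\tn{n}\sqrt{\beta/\tau})$, produces the bound with the claimed prefactor $\tau^{2}\tn{n}\,e^{\kappa\tn{n}}\cosh(\tn{n}L^{1/2})$; the two $\tn{n}$-factors arise respectively from the linear growth of $\|X_k\|_{\cH\to\cH}$ and from summing over $n$ defect contributions.

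The main obstacle I foresee is the careful bookkeeping of operator norms on $\cH$ rather than on an energy norm: since $\calAo$ is only elliptic and not self-adjoint (only its principal part $\calA_1$ is), neither $\cos(t\Omo)$ nor $\sinc(t\Omo)$ comes with a functional calculus, and all bounds must be read off the PDE interpretation from Lemma~\ref{lem:existenceLinUnconstrained}. This forces a $\kappa$-exponential through every estimate, turns the sum in~\eqref{eq:formulaXn} into something growing linearly in $n$ rather than staying uniformly bounded, and is ultimately the reason why the explicit Chebyshev representation from Lemma~\ref{lem:Xn}, together with the termwise $\cH$-bound from Lemma~\ref{lem:properties:cos}\,\ref{item:properties:cos_1}, is indispensable to close the Gronwall step.
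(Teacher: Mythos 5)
Your proposal is correct and follows essentially the same route as the paper: the same symmetric application of the variation-of-constants formula~\eqref{eq:slnFormulaPDAE} around $\tn{n}$, the same integral identity for $\calAo^{-1}[\id-\cos(\tau\Omo)]$, the same two-step error recursion resolved with the operators $X_n$ of Lemma~\ref{lem:Xn}, the termwise $\cH$-bounds from Lemma~\ref{lem:properties:cos}, and a discrete Gronwall argument of the stated type. The only cosmetic difference is that the paper phrases the bounds on $2(\id-\cos(\tau\Omo))(b^n-b(\tn{n}))$ and on the quadrature defect by interpreting these terms as solutions of the auxiliary wave equation~\eqref{eq:uker} and invoking the stability estimate of Lemma~\ref{lem:existenceLinUnconstrained}, which is exactly the source of the operator bounds you appeal to.
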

\begin{proof}
As in the beginning of Section~\ref{sec:Gautschi:scheme}, we consider~\eqref{eq:slnFormulaPDAE} at times $\tn{n+1}$ and $\tn{n-1}$. In combination with the approximate solution and formula~\eqref{eq:Gautschi:PDAE}, we obtain for the error~$\errunGau{n} \coloneqq  u^n - u(\tn{n})$ the equation 
\begin{align*}
	\errunGau{n+1} + \errunGau{n-1}
%	&= u^{n+1} - u(\tn{n+1}) + u^{n-1} - u(\tn{n-1})  \\
	%
	&= 2 \cos(\tau\Omo) \errunGau{n} + 2\, \big(1 - \cos(\tau\Omo)\big) \calAo^{-1}\, \big[ f(\tn{n}, u^n) - \calB^-\ddot g^n \big] \\
	&\qquad - \int_0^\tau (\tau-s)\, \sinc((\tau-s)\, \Omo)\, \big[ f(\tn{n}+s, u(\tn{n}+s)) - \calB^-\ddot g(\tn{n}+s) \\ 
	&\hspace{5.8cm}+ f(\tn{n}-s, u(\tn{n}-s)) - \calB^-\ddot g(\tn{n}-s) \big] \ds.
\end{align*}
Using the equality 
\[
	\big(1 - \cos(\tau\Omo) \big) \calAo^{-1}\, h_n
	= \int_0^\tau\, (\tau-s)\, \sinc\big((\tau-s)\, \Omo \big)\, h_n \ds
\]
with $h_n = f(\tn{n}, u(\tn{n})) - \calB^-\ddot g^n$, we get 
\begin{align}
	\errunGau{n+1} + \errunGau{n-1} 
	&= 2 \cos(\tau\Omo) \errunGau{n} + 2\, \big(1 - \cos(\tau\Omo)\big) \calAo^{-1}\, \big[ f(\tn{n}, u^n) - f(\tn{n}, u(\tn{n})) \big] \notag \\
	&\quad - \int_0^\tau (\tau-s)\, \sinc((\tau-s)\, \Omo)\, \big[ f(\tn{n}+s, u(\tn{n}+s)) - \calB^-\ddot g(\tn{n}+s) \label{eq:inproof:errorGautschi} \\ 
	&\hspace{1.3cm}- 2f(\tn{n}, u(\tn{n})) + 2\calB^-\ddot g(\tn{n}) + f(\tn{n}-s, u(\tn{n}-s)) - \calB^-\ddot g(\tn{n}-s) \big] \ds. \notag
\end{align}
Note that the second summand of the right-hand side equals the solution of~\eqref{eq:uker} with constant right-hand side $2f(\tn{n},u^n) - 2f(\tn{n},u(\tn{n}))\in \calH^*$ and homogeneous initial values (and $g=0$) evaluated at $t=\tau$. We denote this solution by $\widetilde{u}_n$ and the stability estimate from Lemma~\ref{lem:existenceLinUnconstrained} implies 
\begin{align*}
	\big\|\dot{\widetilde{u}}_n(t)\big\|_{\calH} %+ \|u(t)\|_{\calA_1}^2 
	\le 2\, e^{\frac{\kappa t}{2}}\,
	\int_0^t e^{-\kappa s}\, \|f(\tn{n},u^n) - f(\tn{n},u(\tn{n}))\|_{\calH^*} \ds. 
\end{align*}
This turns into 
\begin{align*}
	\|\widetilde{u}_n(\tau)\|_{\calH} 
	\le \int_0^\tau \|\dot{\widetilde{u}}_n(t)\|_{\calH} \dt
	&\le \int_0^\tau 2\, e^{\frac{\kappa t}{2}}\, \int_0^t e^{-\kappa s}\, \|f(\tn{n},u^n) - f(\tn{n},u(\tn{n}))\|_{\calH^*} \ds \dt \\
	&\le 2\, L_{\cH}\, \|e_n\|_\calH\, \int_0^\tau e^{\frac{\kappa t}{2}}\, \int_0^t e^{-\kappa s}\, \ds \dt \\
%	&\le 2\, L_{\cH}\, \|e_n\|_\calH\, \int_0^\tau e^{\frac{\kappa t}{2}}\, t\dt \\
	&\le \tau^2\, L_{\cH}\, e^{\frac{\kappa \tau}{2}}\, \|e_n\|_\calH
\end{align*}
with $L_{\cH}$ denoting the Lipschitz constant of $f$ from Assumption~\ref{assRHS}. Also, the integral term in~\eqref{eq:inproof:errorGautschi} equals the solution of~\eqref{eq:uker} evaluated at $t=\tau$ for homogeneous initial data but now with right-hand side
\[
%	t \mapsto 
	f(\tn{n} +t,u(\tn{n}+t)) - 2 f(\tn{n},u(\tn{n})) + f(\tn{n} -t,u(\tn{n}-t))
	- \calB^-\ddot g(\tn{n}+t) + 2 \calB^-\ddot g(\tn{n}) - \calB^-\ddot g(\tn{n}-t).
\]
We denote this solution by $\widehat{u}_n$. Using a Taylor expansion, we obtain similarly as before 
\begin{align*}
	\|\widehat{u}_n(\tau)\|_{\calH} 
	&\le \int_0^\tau \|\dot{\widehat{u}}_n(t)\|_{\calH} \dt \\
	&\le e^{\frac{\tau \kappa}{2}} \int_0^\tau \int_0^t \big\|f(\tn{n} +s,u(\tn{n}+s)) - 2 f(\tn{n},u(\tn{n})) +f(\tn{n} -s,u(\tn{n}-s)) \big\|_{\calH^*} \\ 
	&\hspace{2.8cm} + \big\|\calB^-\ddot g(\tn{n}+s) - 2 \calB^-\ddot g(\tn{n}) + \calB^-\ddot g(\tn{n}-s) \big\|_{\calH^*} \ds \dt\\
	&\le e^{\frac{\tau \kappa}{2}} \int_0^\tau \int_0^t \| s^2 \ddot f(\tn{n},u(\tn{n})) \|_{\calH^*} + \| s^2 \calB^- g^{(4)}(\tn{n}) \|_{\calH^*} + \calO(s^3) \ds \dt \\
	&\le \frac{\tau^4}{12}\, e^{\frac{\tau \kappa}{2}} \Big( \big\|\ddot f(\tn{n},u(\tn{n})) \big\|_{\calH^*} + C_{\V\to\cH}C_{\calB^-} \| g^{(4)}(\tn{n}) \|_{\Q^*} \Big) + \calO(\tau^5).
\end{align*}
To summarize, we have the recursion  
\[
	\errunGau{n+1} + \errunGau{n-1} 
	= 2 \cos(\tau\Omo) \errunGau{n} + \widetilde{u}_n(\tau) + \widehat{u}_n(\tau).
\]
By mathematical induction, one can show that this can be written in the explicit form 
\[
	\errunGau{n} 
	= -X_{n-2}\, e_0 +  X_{n-1}\, e_1 + \sum_{j=1}^{n-1} X_{n-1-j}\, \big(\widetilde{u}_j(\tau) - \widehat{u}_j(\tau) \big)
\]
with the~$X_j$ from Lemma~\ref{lem:Xn}. With formula~\eqref{eq:formulaXn} and the boundedness of~$\cos(\tau\Omo)$ shown in Lemma~\ref{lem:properties:cos}, these $X_j$ satisfy  
\[
	\|X_j\|_*
	\coloneqq \|X_j\|_{\calL(\cH,\cH)}
	\le (j+1)\, e^{2\frac{\kappa j \tau}{2}}
	= (j+1)\, e^{\kappa \tn{j}}. 
\]
and accordingly $\|X_j\|_* \le j+1$ in the symmetric case. With $C_{f,g}$ as defined above, this then leads to 
\begin{align*}
	\|\errunGau{n}\|_\cH 
	&\le 0 + \|X_{n-1}\|_* \tau^3 E_\text{init} + \sum_{j=1}^{n-1} \|X_{n-1-j}\|_* \big( \|\widetilde{u}_j(\tau)\|_\cH + \|\widehat{u}_j(\tau)\|_\cH \big) \\
	&\le n\, \tau^3\, e^{\kappa \tn{n}} E_\text{init} + \sum_{j=1}^{n-1} (n-j)\, e^{\kappa \tn{n-j}} \big( \|\widetilde{u}_j(\tau)\|_\cH + \|\widehat{u}_j(\tau)\|_\cH \big) \\
%	
%	&\le \tau^2\, \tn{n}\, e^{\kappa \tn{n}} E_\text{init} + \sum_{j=1}^{n-1} (n-j)\, e^{\kappa \tn{n-j}} \big( \tau^2\, L_{\cH}\, e^{\frac{\kappa \tau}{2}}\, \|e_j\|_\calH + \frac{\tau^4}{12}\, e^{\frac{\tau \kappa}{2}} C_{f,g} + \calO(\tau^5) \big)
%
	&\lesssim \tau^2\, \tn{n}\, e^{\kappa \tn{n}} \big( E_\text{init} + \tn{n}C_{f,g} \big) 
	+ \tau^2\, e^{\kappa \tn{n}} L_{\cH}\, %e^{\frac{\kappa \tau}{2}}\,
	\sum_{j=1}^{n-1} (n-j)\, \|e_j\|_\calH 
	+ \calO(\tau^3).
\end{align*}
An application of~\cite[Lem.~2]{GarSS98} finally gives the stated result.
%
%\[
%	\|\errunGau{n}\|_\cH
%	\le \cosh(\tn{n} \sqrt{e^{\kappa \tn{n}} \max L_j})\, \Big[ \tau^2\, \tn{n}\, e^{\kappa \tn{n}} \big( E_\text{init} + C_{f,g} \big) + \calO(\tau^3) \Big]
%\]
\end{proof}
%
%
%=============================================================================
\subsection{Practical Implementation}
\label{sec:Gautschi:practical}
Before presenting numerical examples, we need to discuss how the introduced Gautschi-type integrator~\eqref{eq:Gautschi:PDAE} can be implemented. For this, we assume a spatial discretization leading to the matrices $A$, $B$, and $M$; cf.~Remark~\ref{rem:saddlepointproblems}.

In the following, we explain how to compute the action of $\calB^-$ and~$\cos(\tau\Omo)$ in the discrete setting. For this, $\Omega_{\ker}$ needs to be replaced by the square root of some matrix~$A_{\ker}$ (analogously to~$\calA_{\ker}$). Here, we consider $A_{\ker}$ as a mapping~$\ker B \subseteq \R^n \to \ker B \subseteq \R^n$. Since we do not want to compute the kernel of~$B$ explicitly, we will only describe the action of~$A_{\ker}$ applied to a vector in terms of an auxiliary saddle point problem rather than assembling the operator itself. Moreover, we are concerned with the efficient evaluation of the cosine-term, for which we introduce an Arnoldi algorithm. 

% computation of B^-
First, we consider the computation of $B^-g^n$ for some~$g^n\in \R^m$. Following the construction in Section~\ref{sec:PDAE:prelim}, $B^-$ should map into (the discrete version of) $\Vc$, i.e., the result should be $A$-orthogonal to the kernel of~$B$. To be precise, $B^-g^n$ should satisfy~$w^T A (B^-g^n) = 0$ for all $w\in\ker B \subseteq \R^n$. To achieve this, we set~$x = B^-g^n \in \R^n$ as the solution of the saddle point problem 
\begin{subequations}
\label{eq:Binvg}	
\begin{alignat}{5}
	A x&\ +\ & B^T \mu\, 
	&= 0, \label{eq:Binvg:a}\\
	B x & & 
	&= g^n. \label{eq:Binvg:b}	
\end{alignat}
\end{subequations}
Note that this system has a unique solution pair $(x,\mu) \in \R^n \times \R^m$, since $B$ is of full rank and $A$ is assumed to be positive definite on~$\ker B$. Obviously, equation~\eqref{eq:Binvg:b} guarantees that $Bx = BB^-g^n = g^n$. Moreover, the desired orthogonality condition is satisfied, since~$w\in\ker B$ yields due to~\eqref{eq:Binvg:a}, 
\[
	w^TAx
%	= w^TAx + w^TB^T\mu
	= w^T (Ax + B^T\mu)
	= 0.
\]

% computation of A_{\ker}
Second, we discuss the application of the operator~$\calA_{\ker}$ in the discrete setting. Given a vector~$v\in \ker B \subseteq \R^n$, we obtain~$x = A_{\ker}v$ as the solution of the saddle point problem 
\begin{subequations}
\label{eq:Akerv}	
\begin{alignat}{5}
	M x&\ +\ & B^T \mu\, 
	&= Av, \label{eq:Akerv:a}\\
	B x & & 
	&= 0 \label{eq:Akerv:b}.	
\end{alignat}
\end{subequations}
Due to the assumed properties on $M$ and $B$, this system is again uniquely solvable. In order to see that~$x$ is indeed the desired vector, first note that equation~\eqref{eq:Akerv:b} ensures that~$x$ is an element of~$\ker B$. The first equation, on the other hand, corresponds to 
\[
	\ip{x,w}_\cH 
	= \langle\calA v, w\rangle
	= \langle\calA_{\ker} v, w\rangle, \qquad
	w\in\Vo 
\]
in the continuous setting. The application of the operator~$\calA_{\ker}$ is crucial for the computation of the cosine-term. 
%
% computation of A_ker^{-1}
For the intermediate step of computing $b^n$, we also need the inverse of $\calA_{\ker}$. In the discrete setting, $b^n = A_{\ker}^{-1}v$ is given by the solution of 
\begin{alignat*}{5}
	A b^n&\ +\ & B^T \mu\, 
	&= v, \\ % Mv ??
	B b^n & & 
	&= 0.	
\end{alignat*}

% Efficient computation of the trigonometric term
Finally, we are concerned with the computation of $\cos(\tau \Omega_\text{ker})x_0$ for a given vector~$x_0\in \ker B \subseteq \R^n$. Since the Taylor series of $\cos(\tau \Omega_\text{ker})$ only contains even powers of~$\Omega_\text{ker}$, we do not need to compute~$\Omega_\text{ker}$ itself. It suffices to compute~$A_\text{ker}$ or rather its action on a vector. Hence, we consider the Krylov subspace spanned by the vectors~$A_\text{ker}^k x_0$ for $k\ge0$. 
% Krylov space
Following the general idea of Krylov subspace methods (see, e.g., \cite{Saa92,EieE06}), we introduce %the Krylov subspaces  
\begin{equation*}
	\calK_r 
	\coloneqq \calK_r(A_{\ker},x_0) 
	\coloneqq \operatorname{span} \big\{ x_0, A_{\ker} x_0,\ldots, A_{\ker}^{r-1}x_0 \big\}. 
\end{equation*}
Obviously, this space does not rely on the computation of $\Omega_{\ker}$. Moreover, we would like to recall that~$A_\text{ker}$ is never computed explicitly. Instead, only the action of $A_{\ker}$ is computed via the saddle point problem~\eqref{eq:Akerv}. In a second step, we generate an orthogonal basis of~$\calK_r$ using the Arnoldi algorithm with $v_1 = {x_0}/{\|x_0\|}$ as initial vector. Collecting these vectors in the isometric matrix $V_r \in \R^{n\times r}$, we obtain~$V_r^T A V_r = H_r$ with an upper Hessenberg matrix $H_r\in \R^{r\times r}$. This then yields the approximation
\begin{equation*}
	\cos(\tau \Omega_\text{ker})x_0
	\approx \|x_0\|\, V_r \cos(\tau H_r) e_1
\end{equation*}
with unit basis vector $e_1 \in \R^r$. For the computation of~$\cos(\tau H_r)$, which is of moderate size, one may use methods described in~\cite[Ch.~12]{Hig08}. For our numerical examples, we use the algorithm introduced in~\cite{AlMHR15}. 
\begin{remark}
Using the Arnoldi method for the computation of the matrix cosine leaves open the question of stopping criteria. 
%For the special case of the matrix exponential, the criterion 
%%
%\[
%	\| e^{A}x_0 - Q_k e^{H_k} Q_k^* x_0 \|_2
%	\approx \| x_0\|_2 h_{k+1,k} |e_k^T e^{H_k} e_1|
%\] 
%%
%is suggested~\cite[check: Ch.~5.2]{Saa92}. 
Here, one may apply the determination criterion presented in~\cite{HocPSTW15}. % p.14: Alg 1, Zeile 14  %% also \cite{GriH08}?
In the numerical experiment of Section~\ref{sec:numerics}, $r=3$ was already sufficient. 
\end{remark}
%
%\begin{remark}
%Given an approximation~$x_t \approx x(t)$, the solution $\mu$ of~\eqref{eq:saddlepoint_problem_Krylov} with right-hand side~$-A x_t$ provides an approximation of the Lagrange multiplier~$\lambda(t)$. 
%\end{remark}
%
%
%=============================================================================
%=========  Numerics
%=============================================================================
\section{Numerical Example}
\label{sec:numerics} 
%
%
%=============================================================================
%\subsection{One-dimensional sine--Gordon equation with integral constraint}
%\label{sec:numerics:example1} 
%%
%In a first numerical example, we aim to verify the proven convergence rate from Theorem~\ref{th}. Without constraint, the sine--Gordon equation reads in one space dimension,  
%%
%\[
%	\ddot u - u''
%	= -\sin(u)\qquad\text{in }\Omega = (0,1); 
%\]
%%
%see~\cite{Rub03} or~\cite{Gri06} in connection with a Gautschi-type integrator. In terms of the present paper, we define the spaces~$\calV \coloneqq H^1_0(\Omega)$ and $\calH \coloneqq L^2(\Omega)$. As additional condition, we fix the integral of the solution on the interval~$I \coloneqq [\frac14,\frac34]$. Hence, we consider the constraint operator 
%%
%\[
%	\calB\colon \calV\to \R, \qquad
%	\calB u \coloneqq \int_I u \dx
%\]
%%
%with right-hand side $g(t)=t$. Note that this operator indeed satisfies Assumption~\ref{assB} as it is obviously linear and continuous. 
%% inf--sup condition: nehme $v = \pm x(1-x)$ oder zitiere AltZ22(J. Math. Anal. Appl.)
%As initial data, we set
%%
%\[
%	u(0,x) \equiv 0, \qquad
%	u'(0,x) = 2\cdot 1_I(x) 
%\]
%% 
%with $1_I$ denoting the indicator function for the interval~$I$. Note that these conditions are consistent, since 
%%
%\[
%	\calB u(0) = 0 = g(0), \qquad
%	\calB \dot u(0) = 1 = \dot g(0). 
%\]
%
%
%
%=============================================================================
%\subsection{Wave equation with kinetic boundary conditions}
%\label{sec:numerics:example2} 
%
As numerical example, we consider the semi-linear wave equation with kinetic boundary conditions without damping. In the strong form, the system equations read 
\begin{subequations}
\label{eq:kineticBC}
\begin{align}
	\ddot u - \Delta u + u 
	&= f_\Omega(t,u)\qquad \text{in }\Omega, \label{eq:kineticBC:a}\\
	\ddot u - \Delta_\Gamma u + \partial_n u 
	&= f_\Gamma(t,u)\qquad \text{on }\Gamma \coloneqq \partial\Omega. \label{eq:kineticBC:b}
\end{align}
\end{subequations}
Here, $\Delta_\Gamma$ denotes the Laplace--Beltrami operator, i.e., the Laplacian along the boundary; cf.~\cite[Ch.~16.1]{GilT01}. For the inhomogeneities, we set
\[
	f_\Omega(t,u) = \sin(t),\qquad
	f_\Gamma(t,u) = - u^3 + u.
\]
Note that $f_\Omega$ satisfies part (a) and (b) of Assumption~\ref{assRHS}, but $f_\Gamma$ only satisfies part (b).

% abstract form 
To show well-posedness of~\eqref{eq:kineticBC}, the system can be written in an abstract form; see~\cite{HipK20,HocL20}. 
% with $V=\{v\in H^1(\Omega)\ |\ v|_{\Gamma} \in H^1(\Gamma) \}$, 
% PDAE form
To obtain the PDAE structure~\eqref{eq:PDAE}, we consider an alternative formulation as introduced in~\cite{Alt19,Alt23}. For this, we introduce a second variable~$p\coloneqq u|_\Gamma$, which is only defined on the boundary. 
With this, we can interpret~\eqref{eq:kineticBC:a} as a dynamic equation for $u$ and~\eqref{eq:kineticBC:b} as a dynamic equation for $p$, both coupled through the mentioned connection. We further define
\[
	\V \coloneqq H^1(\Omega)\times H^1(\Gamma), \qquad
	\cH \coloneqq L^2(\Omega)\times L^2(\Gamma), \qquad
	\Q \coloneqq H^{-1/2}(\Gamma).
\]
As shown in~\cite{Alt23}, the weak formulation of~\eqref{eq:kineticBC} can be written as 
\begin{subequations}
\label{eq:PDAE:kinetic}
	\begin{align}
	\begin{bmatrix} \ddot u \\ \ddot p  \end{bmatrix}
	+ \begin{bmatrix} \calA_\Omega &  \\  & \calA_\Gamma \end{bmatrix}
	\begin{bmatrix} u \\ p  \end{bmatrix}
	+ \calB^*\lambda 
	&= \begin{bmatrix} f_\Omega(t,u) \\ f_\Gamma(t,p) \end{bmatrix} \qquad \text{in } \V^*, \\
	\calB\, \begin{bmatrix} u \\ p  \end{bmatrix} \phantom{i + \calB \lambda} &= \phantom{[]} 0\hspace{5.6em} \text{in } \Q^*, 
	\end{align}
\end{subequations}
which has exactly the structure of~\eqref{eq:PDAE}. Therein, the constraint operator is given by~$\calB\colon \V\to\Q^*=H^{1/2}(\Gamma)$, $\calB(u,p) \coloneqq p - \trace u$ with the usual trace operator. The kernel of $\calB$ in $\V$ is given by
\[
	\Vo 
	= \ker\calB
	= \big\{ (u,p)\in\V\ |\ \trace u = p \big\}, 
\]
leading to $\cHo = \cH$.
Thus, we are in the modified setting of Remark~\ref{rem:H_eq_H_ker}. The differential operators are given by~$\calA_\Omega\colon H^1(\Omega) \to [H^1(\Omega)]^*$ and~$\calA_\Gamma\colon H^1(\Gamma) \to [H^1(\Gamma)]^*$, 
\begin{align*}
%\label{eq:operatorK}
	\langle \calA_\Omega u, v\rangle
	= \int_\Omega \nabla u \cdot \nabla v + uv \dx, \qquad
	\langle \calA_\Gamma p, q\rangle
	= \int_\Gamma \nabla_\Gamma p \cdot \nabla_\Gamma q \dx.
\end{align*}
Note that, in contrast to~\eqref{eq:kineticBC}, system~\eqref{eq:PDAE:kinetic} contains two additional variables, namely~$p$ for the trace values and a Lagrange multiplier~$\lambda\colon [0,T] \to \Q$, which enforces the constraint.  

% numrical results
The numerical tests are carried out on the unit disc, i.e., we consider
\[
	\Omega 
	= \big\{ (x,y)\in\R^2\ |\ x^2 + y^2 \le 1 \big\}.
\]
The spatial meshes are generated by DistMesh \cite{PerS04}, and we use piecewise linear finite element approximations. Moreover, we set $T = 1$ and, as initial condition, 
\[
	u(0) 
	= u^0 
	\coloneqq \exp\big( -20\,((x-1)^2 + y^2) \big), \qquad
	\dot u(0) 
	= \dot u^0 
	\coloneqq 0.
\]
Accordingly, $p(0)$ and $\dot p(0)$ are defined consistently. 
Within the experiment, we have used six spatial level refinements, leading to a mesh size of around $h \approx 0.0672$. The reference solution is computed on the same spatial grid by the Gautschi-type integrator with Krylov subspace dimension $r = 10$ and a time step size of $\tau_\text{ref} = 2^{-13}$. 

% results
The obtained convergence history is presented in Figure~\ref{fig:convL2}. The displayed lines show the errors in $u$ measured in the $L^\infty(0,T;L^2(\Omega))$-norm. One can see the first-order convergence of the IMEX Euler scheme, i.e., of the implicit Euler scheme where the nonlinearity is treated explicitly, i.e., 
\begin{align*}
	- \tau\, w^{n+1} \hspace{0.52cm} + u^{n+1}\hspace{1.91cm}
	&= u^n, 
	\\
	w^{n+1} + \tau \calA u^{n+1} + \tau \calB^* \lambda^{n+1}
	&= w^n + \tau f^n,  
	\\ 
	\calB u^{n+1} \hspace{1.91cm}
	&= g^{n+1}.
\end{align*}	
We further observe the proven second-order accuracy of the IMEX scheme~\eqref{eq:IMEX_damp}. For the Gautschi-type integrator~\eqref{eq:Gautschi:PDAE}, we present three versions that differ by the dimension of the Krylov subspaces used for the computation of the matrix cosine. For dimension $r=1$, the error stagnates but already for $r=2$ the errors show second-order convergence. The results improve once more for $r=3$ but remain unchanged for larger dimensions $r>3$. We note that the choice of $f_\Gamma$ is covered by the assumption for the IMEX scheme, but not the results for the Gautschi method. However, different choices lead to similar convergence plots. 
Finally, Figure~\ref{fig:convL2} also shows the results for the fully explicit leapfrog scheme. Here, we observe the expected second-order convergence as soon as the scheme gets stable for sufficiently small time step sizes. 

If one considers the $L^\infty(0,T;H^1(\Omega))$-norm, in which the IMEX \CN error was estimated in Theorem~\ref{th:IMEX:convergence}, we observe the same orders but with larger error constants.  
\begin{figure}
	%% used data: 
	% mesh refinement level 6 (h = 0.067169)
	% tau list 2.^(-1:-1:-12) 
	% reference sol by Gautschi tau = 2^(-13)
	% This file was created by matlab2tikz.
%
%The latest updates can be retrieved from
%  http://www.mathworks.com/matlabcentral/fileexchange/22022-matlab2tikz-matlab2tikz
%where you can also make suggestions and rate matlab2tikz.
%
\begin{tikzpicture}

\begin{axis}[%
width=4.5in,
height=2.2in,
scale only axis,
xmode=log,
xmin=0.0002,
xmax=0.3,
xminorticks=true,
xlabel={time step size $\tau$},
ymode=log,
ymin=1.2e-08,
ymax=4,
yminorticks=true,
ylabel={error in $u$}, % $L^\infty(0,T;L^2(\Omega))$ 
axis background/.style={fill=white},
title style={font=\bfseries},
legend style={at={(0.97,0.03)}, anchor=south east, legend cell align=left, align=left, draw=white!15!black}
]

\addplot [color=mycolor1, line width=1.5pt, mark=*, mark size=2.5]
 table[row sep=crcr]{%
%	0.5	0.17881293\\
	0.25	0.13231349\\
	0.125	0.10047058\\
	0.0625	0.07253974\\
	0.03125	0.048752931\\
	0.015625	0.030677557\\
	0.0078125	0.018240898\\
	0.00390625	0.010313642\\
	0.001953125	0.005594367\\
	0.0009765625	0.0029454985\\
	0.00048828125	0.001521673\\
	0.000244140625	0.00077675614\\
};
\addlegendentry{IMEX Euler} % lvl 6 (h=0.067169)

\addplot [color=mycolor2, line width=1.5pt, mark=triangle*, mark size=3.0]
table[row sep=crcr]{%
%	0.5	0.15925425\\
	0.25	0.095147238\\
	0.125	0.04740909\\
	0.0625	0.02049387\\
	0.03125	0.007037195\\
	0.015625	0.0021214459\\
	0.0078125	0.0006682096\\
	0.00390625	0.00019542444\\
	0.001953125	4.9751331e-05\\
	0.0009765625	1.2461085e-05\\
	0.00048828125	3.1161527e-06\\
	0.000244140625	7.7904691e-07\\
};
\addlegendentry{IMEX CN} % lvl 6 (h=0.067169)

\addplot [color=mycolor3!25!white, line width=1.5pt, mark=square*, mark size=2.8]
table[row sep=crcr]{%
%	0.5	2.2623234\\
%	0.25	0.49685978\\
	0.125	2.9101839\\
	0.0625	2.9636563\\
	0.03125	2.017452\\
	0.015625	1.2190186\\
	0.0078125	0.88987993\\
	0.00390625	0.75769521\\
	0.001953125	0.70051781\\
	0.0009765625	0.67428643\\
	0.00048828125	0.66177465\\
	0.000244140625	0.65567137\\
};
\addlegendentry{Gautschi ($r=1$)} % lvl 6 (h=0.067169), KrylovDim = 1

\addplot [color=mycolor3!40!white, line width=1.5pt, mark=square, mark size=2.8]
table[row sep=crcr]{%
%	0.5	12.592173\\
	0.25	2.4557014\\
	0.125	1.3502588\\
	0.0625	0.48398435\\
	0.03125	0.23941442\\
	0.015625	0.0099419191\\
	0.0078125	0.0024957581\\
	0.00390625	0.00062673088\\
	0.001953125	0.00015693559\\
	0.0009765625	3.926216e-05\\
	0.00048828125	9.8198502e-06\\
	0.000244140625	2.4564866e-06\\
};
\addlegendentry{Gautschi ($r=2$)} % lvl 6 (h=0.067169), KrylovDim = 2

\addplot [color=mycolor3, line width=1.5pt, mark=square*, mark size=2.8]
table[row sep=crcr]{%
%	0.5	12.18166\\
	0.25	3.421906\\
	0.125	1.0177151\\
	0.0625	0.23353575\\
	0.03125	0.0026069048\\
	0.015625	0.00011594449\\
	0.0078125	6.7425183e-06\\
	0.00390625	1.1259311e-06\\
	0.001953125	3.1497388e-07\\
	0.0009765625	8.0469759e-08\\
	0.00048828125	1.8819824e-08\\
	0.000244140625	3.0911977e-09\\
};
\addlegendentry{Gautschi ($r=3$)} % lvl 6 (h=0.067169), KrylovDim = 3

% new for revision: leap-frog

\addplot [color=mycolor5, line width=1.5pt, mark=diamond*, mark size=2.]
table[row sep=crcr]{%
0.015625	0.0011409142\\
0.0078125	0.00037726543\\
0.00390625	9.8606651e-05\\
0.001953125	2.4792172e-05\\
0.0009765625	6.2120785e-06\\
0.00048828125	1.5550097e-06\\
0.000244140625	3.8907454e-07\\
};
\addlegendentry{leap-frog} % lvl 6 (h=0.067169)

%%%%%%%%%%%%%%%%%%%  orders  %%%%%%%%%%%%%

\addplot [color=gray, dashed, line width=1.25pt, forget plot]
table[row sep=crcr]{%
%	0.5	0.5\\
	0.25	0.25\\
%	0.125	0.125\\
%	0.0625	0.0625\\
%	0.03125	0.03125\\
%	0.015625	0.015625\\
%	0.0078125	0.0078125\\
%	0.00390625	0.00390625\\
%	0.001953125	0.001953125\\
%	0.0009765625	0.0009765625\\
	0.00048828125	0.00048828125\\
	0.000244140625	0.000244140625\\
};

\addplot [color=gray, dotted, line width=1.25pt, forget plot]
  table[row sep=crcr]{%
%	0.5	0.25\\
	0.25	0.0625\\
%	0.125	0.015625\\
%	0.0625	0.00390625\\
%	0.03125	0.0009765625\\
%	0.015625	0.000244140625\\
%	0.0078125	6.103515625e-05\\
%	0.00390625	1.52587890625e-05\\
%	0.001953125	3.814697265625e-06\\
%	0.0009765625	9.5367431640625e-07\\
	0.00048828125	2.38418579101562e-07\\
	0.000244140625	5.8276953e-08\\
};

\end{axis}

\end{tikzpicture}%
	\caption{Convergence history for different time stepping schemes. Plot shows error in $u$, measured in the $L^\infty(0,T;L^2(\Omega))$-norm. The gray lines indicate orders $1$ (dashed) and $2$ (dotted). }	
	\label{fig:convL2}
\end{figure}
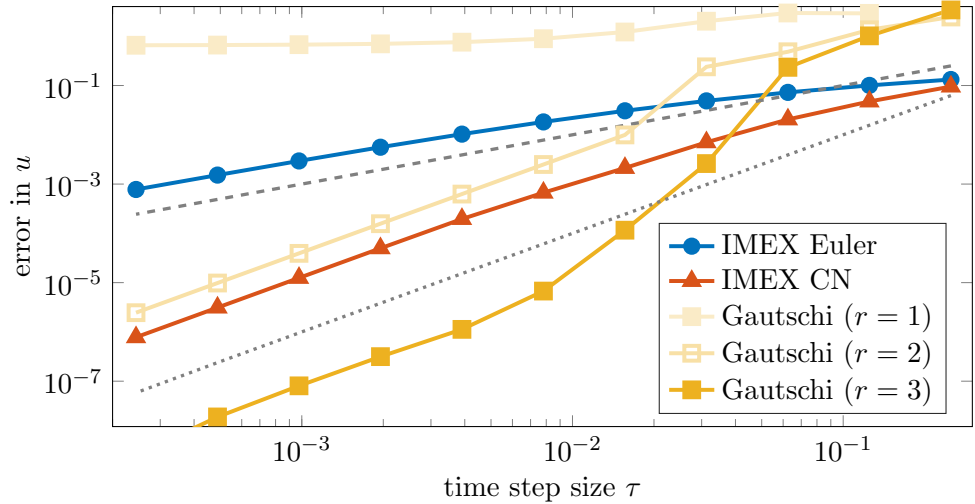

The code is available at \texttt{https://github.com/altmannr/codeForGautschiCNPaper}.
%
%
%=============================================================================
%=========  Conclusion
%=============================================================================
\section{Conclusion}
\label{sec:conclusion}
In this paper, we have introduced and analyzed two novel time integration schemes for semi-linear wave-type systems restricted by a linear constraint. More precisely, we have considered an implicit--explicit version of the \CN scheme as well as an integrator of Gautschi-type. For the second-order convergence proofs, we combine well-known results for the dynamical part with certain saddle point problems in order to include the constraints. 

We emphasize that the error analysis of the IMEX method can be easily adapted to the (original) \CN scheme. This, however, does not seem computationally attractive due to the nonlinear saddle problems that have to be solved in each time step.
In addition, a combination of the two analyses seems to be a good starting point to derive error bounds also for more general exponential integrators applied to constrained systems.
%
%
%=============================================================================
%=========  Acknowledgments
%=============================================================================
\section*{Acknowledgments}  
R.~Altmann acknowledges funding by the Deutsche Forschungsgemeinschaft (DFG, German Research Foundation) through the project 446856041.
B.~D\"orich acknowledges funding by the Deutsche Forschungsgemeinschaft (DFG, German Research Foundation) through the Project-ID 258734477 -- SFB 1173. 
Finally, C.~Zimmer acknowledges support by the European Research Council (ERC) under the European Union's Horizon 2020 research and innovation programme (Grant agreement No.~865751 --  RandomMultiScales).
%
%

%=============================================================================
%=========  Bibliography
%=============================================================================
\bibliographystyle{alpha} % myalpha % abbrv
\bibliography{bib_Gautschi}
\appendix

\end{document}